\newtheorem{theorem}{Theorem}[section]
\newtheorem{definition}[theorem]{Definition}
\newtheorem{lemma}[theorem]{Lemma}
\newtheorem{example}[theorem]{Example}
\newtheorem{corollary}[theorem]{Corollary}
\newtheorem{proposition}[theorem]{Proposition}
\newtheorem{fact}[theorem]{Fact}
\newtheorem{remark}[theorem]{Remark}
\def\bd{\boldsymbol}
\def\pn{\par\smallskip\noindent}
\DeclareMathAlphabet\mathbfcal{OMS}{cmsy}{b}{n}
\newcommand{\N}{\mathbb{N}}
\newcommand{\R}{\mathbb{R}}
\newcommand{\F}{\mathcal{F}}
\newcommand{\bbB}{\mathbb{B}}
\newcommand{\bbO}{\mathbb{O}}
\newcommand{\G}{\mathcal{G}}
\newcommand{\HI}{\mathcal{H}}
\newcommand{\ba}{\boldsymbol{a}}
\newcommand{\bA}{\boldsymbol{A}}
\newcommand{\bb}{\boldsymbol{b}}
\newcommand{\bc}{\boldsymbol{c}}
\newcommand{\bB}{\boldsymbol{B}}
\newcommand{\bC}{\boldsymbol{C}}
\newcommand{\bI}{\boldsymbol{I}}
\newcommand{\bJ}{\boldsymbol{J}}
\newcommand{\be}{\boldsymbol{e}}
\newcommand{\bh}{\boldsymbol{h}}
\newcommand{\bH}{\boldsymbol{H}}
\newcommand{\bO}{\boldsymbol{O}}
\newcommand{\bx}{\boldsymbol{x}}
\newcommand{\by}{\boldsymbol{y}}
\newcommand{\bz}{\boldsymbol{z}}
\newcommand{\bv}{\boldsymbol{v}}
\newcommand{\bw}{\boldsymbol{w}}
\newcommand{\bp}{\boldsymbol{p}}
\newcommand{\bP}{\boldsymbol{P}}
\newcommand{\bq}{\boldsymbol{q}}
\newcommand{\bQ}{\boldsymbol{Q}}
\newcommand{\bs}{\boldsymbol{s}}
\newcommand{\bS}{\boldsymbol{S}}
\newcommand{\bW}{\boldsymbol{W}}
\newcommand{\bX}{\boldsymbol{X}}
\newcommand{\bY}{\boldsymbol{Y}}
\newcommand{\bZ}{\boldsymbol{Z}}
\newcommand{\bbU}{\mathbb{U}}
\newcommand{\bbV}{\mathbb{V}}
\newcommand{\TP}{\mathcal{P}}
\newcommand{\T}{\textnormal{T}}
\newcommand{\TF}{\textnormal{F}}
\newcommand\pig[1]{\scalerel*[5pt]{\big#1}{%
  \ensurestackMath{\addstackgap[1.5pt]{\big#1}}}}
\def\multiset#1#2{\ensuremath{\left(\kern-.3em\left(\genfrac{}{}{0pt}{}{#1}{#2}\right)\kern-.3em\right)}}
\newcommand\tsup[2][2]{%
 \def\useanchorwidth{T}%
  \ifnum#1>1%
    \stackon[-.5pt]{\tsup[\numexpr#1-1\relax]{#2}}{\scriptscriptstyle\sim}%
  \else%
    \stackon[.5pt]{#2}{\scriptscriptstyle\sim}%
  \fi%
}
\newcommand\restr[2]{{
  \left.\kern-\nulldelimiterspace 
  #1 
  \littletaller 
  \right|_{#2} 
  }}
\newcommand{\littletaller}{\mathchoice{\vphantom{\big|}}{}{}{}}
\tikzstyle{red dot}=[fill=red, draw=black, shape=circle]
\tikzstyle{green dot}=[fill=green, draw=black, shape=circle]
\tikzstyle{medium box}=[fill=white, draw=black, shape=rectangle, minimum width=0.75cm, minimum height=1cm]
\tikzstyle{0.1pt}=[fill=black, draw=black, shape=circle, minimum size=0.1pt, inner sep=0pt]
\tikzstyle{0.5pt}=[fill=black, draw=black, shape=circle, minimum size=0.5pt, inner sep=0pt]
\tikzstyle{1pt}=[fill=black, draw=black, shape=circle, minimum size=1pt, inner sep=0pt]
\tikzstyle{2pt}=[fill=black, draw=black, shape=circle, minimum size=2pt, inner sep=0pt]
\tikzstyle{3pt}=[fill=black, draw=black, shape=circle, minimum size=3pt, inner sep=0pt]
\tikzstyle{3pthide}=[fill=black, draw=black, shape=circle, minimum size=3pt, inner sep=0pt, opacity=0.1]
\tikzstyle{5pt}=[fill=black, draw=black, shape=circle, minimum size=5pt, inner sep=0pt]
\tikzstyle{big circle}=[fill=none, draw=black, shape=circle, minimum size=20cm, inner sep=0pt, ultra thick]
\tikzstyle{0.5none}=[fill=none, draw=none, shape=circle, scale=0.5]
\tikzstyle{small circle}=[fill=none, draw=black, shape=circle, minimum size=5cm, inner sep=0pt, ultra thick]
\tikzstyle{directional}=[>=stealth, ->]
\tikzstyle{thick direc}=[>=stealth, ->, very thick]
\tikzstyle{dashes}=[-, densely dotted, thick]
\tikzstyle{wavy}=[-, snake it, thick]
\tikzstyle{big dashes}=[-, thick, dashed, dash pattern=on 4mm off 2mm, fill=cyan]
\tikzstyle{blue directional}=[draw=blue, ->, very thick, >=stealth]
\tikzstyle{red directional}=[draw=red, ->, very thick, >=stealth]
\tikzstyle{green directional}=[draw=green, ->, very thick, >=stealth]
\tikzstyle{thin line}=[-, very thin]
\tikzstyle{purple directional}=[->, draw=magenta, very thick, >=stealth]
\tikzstyle{pure shade}=[-, draw=none, fill={rgb,255: red,191; green,191; blue,191}, opacity=0.5]
\tikzstyle{pure rshade}=[-, draw=none, fill={rgb,255: red,255; green,0; blue,0}, opacity=0.5]
\tikzstyle{pure gshade}=[-, draw=none, fill={rgb,255: red,0; green,255; blue,0}, opacity=0.5]
\tikzstyle{blue line}=[-, fill=none, draw=blue, very thick]
\tikzstyle{purple line}=[-, draw=magenta, very thick]
\tikzstyle{line shade}=[-, draw=black, fill={rgb,255: red,191; green,191; blue,191}]
\tikzstyle{line shade 2}=[-, draw=black, fill={rgb,255: red,128; green,128; blue,128}]
\tikzstyle{thick line}=[-, very thick]
\tikzstyle{tlhide}=[-, very thick, opacity=0.1]
\pgfplotsset{compat=1.16}
\tikzset{viewport/.style 2 args={
    x={({cos(-#1)*1cm},{sin(-#1)*sin(#2)*1cm})},
    y={({-sin(-#1)*1cm},{cos(-#1)*sin(#2)*1cm})},
    z={(0,{cos(#2)*1cm})}
}}
\pgfplotsset{only foreground/.style={
    restrict expr to domain={rawx*\CameraX + rawy*\CameraY + rawz*\CameraZ}{-0.05:100},
}}
\pgfplotsset{only background/.style={
    restrict expr to domain={rawx*\CameraX + rawy*\CameraY + rawz*\CameraZ}{-100:0.05}
}}
\def\addFGBGplot[#1]#2;{
    \addplot3[#1,only background, opacity=0.25] #2;
    \addplot3[#1,only foreground] #2;
}
\pgfmathsetmacro\xx{1/sqrt(2)}
\pgfmathsetmacro\xy{1/sqrt(6)}
\pgfmathsetmacro\zy{sqrt(2/3)}
\DeclareMathAlphabet\mathbfcal{OMS}{cmsy}{b}{n}
\def\tikz@lib@cuboid@get#1{\pgfkeysvalueof{/tikz/cuboid/#1}}
\def\tikz@lib@cuboid@setup{%
   \pgfmathsetlengthmacro{\vxx}%
      {\tikz@lib@cuboid@get{xscale}*cos(\tikz@lib@cuboid@get{xangle})*1cm}
   \pgfmathsetlengthmacro{\vxy}%
      {\tikz@lib@cuboid@get{xscale}*sin(\tikz@lib@cuboid@get{xangle})*1cm}
   \pgfmathsetlengthmacro{\vyx}%
      {\tikz@lib@cuboid@get{yscale}*cos(\tikz@lib@cuboid@get{yangle})*1cm}
   \pgfmathsetlengthmacro{\vyy}%
      {\tikz@lib@cuboid@get{yscale}*sin(\tikz@lib@cuboid@get{yangle})*1cm}
   \pgfmathsetlengthmacro{\vzx}%
      {\tikz@lib@cuboid@get{zscale}*cos(\tikz@lib@cuboid@get{zangle})*1cm}
   \pgfmathsetlengthmacro{\vzy}%
      {\tikz@lib@cuboid@get{zscale}*sin(\tikz@lib@cuboid@get{zangle})*1cm}
}
\def\tikz@lib@cuboid@draw#1--#2--#3\pgf@stop{%
    \begin{scope}[join=bevel,x={(\vxx,\vxy)},y={(\vyx,\vyy)},z={(\vzx,\vzy)}]
       \begin{scope}[canvas is yz plane at x=#1]
          \draw[cuboid/all faces,cuboid/edges,cuboid/right face] 
                (0,0) -- ++(#2,0) -- ++(0,-#3) -- ++(-#2,0) -- cycle;
          \draw[cuboid/all grids,cuboid/right grid] (0,0) grid (#2,-#3);
       \end{scope}
       \begin{scope}[canvas is xy plane at z=0]
          \draw[cuboid/all faces,cuboid/edges,cuboid/front face] 
                (0,0) -- ++(#1,0) --  ++(0,#2) -- ++(-#1,0) -- cycle;
          \draw[cuboid/all grids,cuboid/front grid] (0,0) grid (#1,#2);
       \end{scope}
       \begin{scope}[canvas is xz plane at y=#2]
          \draw[cuboid/all faces,cuboid/edges,cuboid/top face] 
                (0,0) -- ++(#1,0) --  ++(0,-#3) -- ++(-#1,0) -- cycle;
          \draw[cuboid/all grids,cuboid/top grid] (0,0) grid (#1,-#3);
       \end{scope}
       \draw[cuboid/hidden edges] (0,#2,-#3) -- (0,0,-#3) -- (0,0,0) 
                (0,0,-#3) -- ++(#1,0,0);
       \begin{scope}[canvas is yz plane at x=#1]
          \draw[cuboid/all faces,cuboid/right face,cuboid/edges,fill opacity=0] 
                (0,0) -- ++(#2,0) -- ++(0,-#3) -- ++(-#2,0) -- cycle;
       \end{scope}
       \begin{scope}[canvas is xy plane at z=0]
          \draw[cuboid/all faces,cuboid/front face,cuboid/edges,fill opacity=0] 
                (0,0) -- ++(#1,0) --  ++(0,#2) -- ++(-#1,0) -- cycle;
       \end{scope}
       \begin{scope}[canvas is xz plane at y=#2]
          \draw[cuboid/all faces,cuboid/top face,cuboid/edges,fill opacity=0] 
                (0,0) -- ++(#1,0) --  ++(0,-#3) -- ++(-#1,0) -- cycle;
       \end{scope}
       \path (0,#2,0) coordinate (-left top front)
                      coordinate (-left front top)
                      coordinate (-top left front)
                      coordinate (-top front left)
                      coordinate (-front top left)
                      coordinate (-front left top);
       \path (0,#2,-#3) coordinate (-left top rear)
                        coordinate (-left rear top)
                        coordinate (-top left rear)
                        coordinate (-top rear left)
                        coordinate (-rear top left)
                        coordinate (-rear left top);
       \path (0,0,-#3) coordinate (-left bottom rear)
                       coordinate (-left rear bottom)
                       coordinate (-bottom left rear)
                       coordinate (-bottom rear left)
                       coordinate (-rear bottom left)
                       coordinate (-rear left bottom);
       \path (0,0,0) coordinate (-left bottom front)
                     coordinate (-left front bottom)
                     coordinate (-bottom left front)
                     coordinate (-bottom front left)
                     coordinate (-front bottom left)
                     coordinate (-front left bottom);
       \path (#1,#2,0) coordinate (-right top front)
                       coordinate (-right front top)
                       coordinate (-top right front)
                       coordinate (-top front right)
                       coordinate (-front top right)
                       coordinate (-front right top);
       \path (#1,#2,-#3) coordinate (-right top rear)
                         coordinate (-right rear top)
                         coordinate (-top right rear)
                         coordinate (-top rear right)
                         coordinate (-rear top right)
                         coordinate (-rear right top);
       \path (#1,0,-#3) coordinate (-right bottom rear)
                        coordinate (-right rear bottom)
                        coordinate (-bottom right rear)
                        coordinate (-bottom rear right)
                        coordinate (-rear bottom right)
                        coordinate (-rear right bottom);
       \path (#1,0,0) coordinate (-right bottom front)
                      coordinate (-right front bottom)
                      coordinate (-bottom right front)
                      coordinate (-bottom front right)
                      coordinate (-front bottom right)
                      coordinate (-front right bottom);
       \coordinate (-left center) at (0,.5*#2,-.5*#3);
       \coordinate (-right center) at (#1,.5*#2,-.5*#3);
       \coordinate (-top center) at (.5*#1,#2,-.5*#3);
       \coordinate (-bottom center) at (.5*#1,0,-.5*#3);
       \coordinate (-front center) at (.5*#1,.5*#2,0);
       \coordinate (-rear center) at (.5*#1,.5*#2,-#3);
       \coordinate (-center) at (.5*#1,.5*#2,-.5*#3);
       \path (0,#2,-.5*#3) coordinate (-left top center) 
                           coordinate (-top left center);
       \path (.5*#1,#2,-#3) coordinate (-top rear center)
                            coordinate (-rear top center);
       \path (#1,#2,-.5*#3) coordinate (-right top center)
                            coordinate (-top right center);
       \path (.5*#1,#2,0) coordinate (-top front center)
                          coordinate (-front top center);
       \path (0,0,-.5*#3) coordinate (-left bottom center) 
                           coordinate (-bottom left center);
       \path (.5*#1,0,-#3) coordinate (-bottom rear center)
                            coordinate (-rear bottom center);
       \path (#1,0,-.5*#3) coordinate (-right bottom center)
                            coordinate (-bottom right center);
       \path (.5*#1,0,0) coordinate (-bottom front center)
                          coordinate (-front bottom center);
       \path (0,.5*#2,0) coordinate (-left front center) 
                           coordinate (-front left center);
       \path (0,.5*#2,-#3) coordinate (-left rear center)
                            coordinate (-rear left center);
       \path (#1,.5*#2,0) coordinate (-right front center)
                            coordinate (-front right center);
       \path (#1,.5*#2,-#3) coordinate (-right rear center)
                          coordinate (-rear right center);
    \end{scope}
}
\tikzset{
  pics/cuboid/.style = {
    setup code = \tikz@lib@cuboid@setup,
    background code = \tikz@lib@cuboid@draw#1\pgf@stop
  },
  pics/cuboid/.default={1--1--1},
  cuboid/.is family,
  cuboid,
  all faces/.style={fill=white},
  all grids/.style={draw=none},
  front face/.style={},
  front grid/.style={},
  right face/.style={},
  right grid/.style={},
  top face/.style={},
  top grid/.style={},
  edges/.style={},
  hidden edges/.style={draw=none},
  xangle/.initial=0,
  yangle/.initial=90,
  zangle/.initial=210,
  xscale/.initial=1,
  yscale/.initial=1,
  zscale/.initial=0.5
}
\newcommand{\tikzcuboidreset}{
\tikzset{cuboid,
  all faces/.style={fill=white},
  all grids/.style={draw=none},
  front face/.style={},
  front grid/.style={},
  right face/.style={},
  right grid/.style={},
  top face/.style={},
  top grid/.style={},
  edges/.style={},
  hidden edges/.style={draw=none},
  xangle=0,
  yangle=90,
  zangle=210,
  xscale=1,
  yscale=1,
  zscale=0.5
}
}
\newcommand{\tikzcuboidset}{\@ifstar\tikzcuboidset@star\tikzcuboidset@nostar} 
\newcommand{\tikzcuboidset@nostar}[1]{\tikzcuboidreset\tikzset{cuboid,#1}}
\newcommand{\tikzcuboidset@star}[1]{\tikzset{cuboid,#1}}
\newif\ifnobrackets
\renewcommand\@cite[2]{\ifnobrackets\else[\fi{#1\if@tempswa , #2\fi}\ifnobrackets\else]\fi\nobracketsfalse}
\tikzset{snake it/.style={decorate, decoration=snake}}
\begin{document}

\title{On subdifferential chain rule of matrix factorization and beyond}

\author{
Jiewen GUAN
\thanks{Department of Systems Engineering and Engineering Management, The Chinese University of Hong Kong, Shatin, New Territories, Hong Kong. Email: seemjwguan@gmail.com}
    \and
Anthony Man-Cho SO
\thanks{Department of Systems Engineering and Engineering Management, The Chinese University of Hong Kong, Shatin, New Territories, Hong Kong. Email: manchoso@se.cuhk.edu.hk}
}

\date{\today}

\maketitle

\begin{abstract}
In this paper, we study equality-type Clarke subdifferential chain rules of matrix factorization and factorization machine.
Specifically, we show for these problems that provided
the latent dimension is larger than some multiple of the problem size
(i.e., slightly overparameterized)
and the loss function is locally Lipschitz, the subdifferential chain rules hold everywhere. In addition, we examine the tightness of the analysis
through some interesting constructions
and make some important observations from the perspective of optimization; e.g., we show that for all this type of problems, computing a stationary point is trivial. Some tensor generalizations and neural extensions are also discussed, albeit they remain mostly open.

\vspace{0.25cm}

\noindent {\bf Keywords:} nonconvex optimization, nonsmooth analysis, subdifferential chain rule, local surjection property, openness, matrix factorization, factorization machine, tensors, neural networks

\vspace{0.25cm}

\noindent {\bf Mathematics Subject Classification (2020):} 
15A23, 
47A07, 
49J52, 
49J53, 
90C26 
\end{abstract}

    \section{Introduction}\label{sec:introduction}
    Strictly speaking, many existing ``subgradient'' algorithms for training deep neural networks (e.g.,~\cite{soltanolkotabi2017learning,arora2019fine}) are not necessarily subgradient-based in any conventional sense due to the possible failure of equality-type subdifferential chain rules for the underlying problems.
    Consequently, it is often difficult to establish rigorous theoretical guarantees for these algorithms.
    Therefore, it is meaningful to 
    study conditions under which equality-type subdifferential chain rules hold for nonsmooth functions that arise in applications.
    Apart from this perspective, subdifferential chain rules are also helpful in some other tasks, e.g., stationarity testing~\cite{yun2018efficiently} and landscape analysis~\cite{guan2024subdiff}. Unfortunately, the research along this direction is still quite limited.
    
    
    In the recent work~\cite{tian2023testing},
    an efficiently-verifiable necessary and sufficient data qualification
    for the validity of subdifferential chain rules
    in training two-layer ReLU neural networks is developed.
    Motivated by this, it is very natural to ask if we can further extend these theories to other more sophisticated neural networks, such as those carrying some multilinear structures (instead of the linearity underlying the ReLU neural networks),
    which are prevalent in recommender systems studies; see, e.g.,~\cite{socher2013reasoning,blondel2016higher,he2017neural,he2017neuralfm,xiao2017attentional,he2018nais,xin2019relational,fan2019graph,chen2021neural}. 
    However, due to the multilinear structure involved, the situation becomes drastically different from the linear setting; e.g., the Jacobian of any multilinear mapping at the origin is always zero, which prevents the application of the well-known surjectivity condition for subdifferential chain rules (see, e.g.,~\cite[Exercise~10.7]{rockafellar2009variational}). As a consequence, to achieve the goal, new techniques have to be developed.

    Towards a thorough understanding of this generalization, as a first step, in this paper, we study equality-type Clarke subdifferential chain rules of matrix factorization (MF)~\cite{haeffele2019structured} and factorization machine (FM)~\cite{rendle2010factorization}, which are prototypes of the aforementioned neural networks.
    Specifically, we show for these problems that provided the latent dimension is larger than some multiple of the problem size (i.e., slightly overparameterized) and the loss function is locally Lipschitz, the subdifferential chain rules hold everywhere.
    Underpinning the theories is an interesting property called the local surjection property (a.k.a.\ openness), which suffices to guarantee the validity of equality-type Clarke subdifferential chain rules and can hold even in absence of surjectivity.
    In addition, we examine the tightness of the analysis through some interesting constructions and make some important observations from the perspective of optimization; e.g., we show that for all this type of problems, computing a stationary point is trivial, 
    which 
    sheds new light on the usefulness
    of the convergence guarantees of many existing algorithms, such as~\cite[Theorem~5(a)]{hastie2015matrix},~\cite[Theorem~4.2]{li2015convergence},~\cite[Theorem~1]{wang2017polynomial},~\cite[Theorem~2]{lin2017robust}, and~\cite[Corollaries~1-2]{zhu2018dropping}.
    To go one step further, we attempt to generalize the developments to the tensor CANDECOMP/PARAFAC (CP) factorization. Despite only some partial results are obtained,
    they open up several interesting and important
    directions for future studies. Besides, some preliminary applications to the aforementioned neural networks are also discussed.

    The rest of the paper is organized as follows. We first introduce the notation and preliminaries in Section~\ref{sec:notation-prelim}. Then, in Section~\ref{sec:main-results}, we first study the local surjection property and Clarke subdifferential chain rules of the prototypical examples, and then discuss their tensor generalizations and neural extensions. Finally, we conclude this paper with several future directions in Section~\ref{sec:conclusion}.

    \section{Notation and preliminaries}\label{sec:notation-prelim}

    \subsection{Notation}\label{sec:notations}
    Throughout this paper, unless stated otherwise, we adopt lowercase letters (e.g., $x$), boldface lowercase letters (e.g., $\bx=(x_i)$), boldface capital letters (e.g., $\bX=(x_{i j})$), and boldface calligraphic letters (e.g., $\mathbfcal{X}=(x_{i_1 i_2 i_3})$) to denote scalars, vectors, matrices, and third-order tensors,\footnote{In this paper we only study third-order tensors as the higher-order cases can be generalized in a straightforward manner.
    } 
    respectively. 
    Denote $\N$ to be the set of positive integers, and we fix some $m,n\in\N$ in subsequent introductions. We define $[n]:=[1,n]\cap\N$,
and use $\|\cdot\|$ to denote the (vector) Euclidean norm, while $\|\cdot\|_{\TF}$ the (matrix/tensor) Frobenius norm.
The $n$-dimensional all-zero vector is denoted by $\bd{0}_n$ and all-one vector by $\bd{1}_n$. Besides, we use $\be^n_i$ to denote the $i$-th standard basis vector in $\R^n$. The $n\times n$ identity matrix is denoted by $\bI_n$ and $m\times n$ zero matrix by $\bO_{m,n}$. In $\R^n$, we use $\bbB^n:=\{\bx\in\R^n:\|\bx\|\le 1\}$
to denote the unit ball, and 
we denote the unit ball in
matrix and tensor spaces similarly.
The subscripts and superscripts of these special vectors, matrices, and sets are often omitted as long as there is no ambiguity. 
Three vector operations are used frequently, namely 
the Kronecker product $\boxtimes$, appending vectors $\vee$, and the Hadamard product $\oast$. In particular, if $\bx\in\R^{m}$ and $\by\in\R^{n}$, then
$$
\bx\boxtimes\by=(x_1\by^{\T},x_2\by^{\T},\dots,x_{m}\by^{\T})^{\T}\in\R^{m n},\quad\bx\vee\by=(x_1,x_2,\dots,x_{m},y_1,y_2,\dots,y_{n})^{\T}\in\R^{m+n},
$$
and
$$
    \bx\oast\by=(x_1 y_1,x_2 y_2,\dots,x_n y_n)^{\T}\in\R^n,\quad\text{if in addition $m=n$}.
$$
Besides, we also use $\vee$ to append matrices and tensors vertically.
We use $\sum_{j>i}$ and $\sum_{i=1}^{n-1}\sum_{j=i+1}^{n}$ interchangeably whenever $n$ is clear from the contexts, and the same convention applies to other operations such as appending and product. 
For any $\bx,\by,\bz\in\R^n$, we use $\langle\bx,\by\rangle:=\sum_{i=1}^n x_i y_i$ to denote the inner product of $\bx$ and $\by$, with a higher-order override $\langle\bx,\by,\bz\rangle:=\sum_{i=1}^n x_i y_i z_i$, and define $\operatorname{supp}(\bx):=\{i\in[n]:x_i\neq 0\}$. Besides, we denote $\operatorname{diag}(\bx)\in\R^{n\times n}$ to be the matrix whose diagonal vector is $\bx$ and off-diagonal entries are zeros.
For any $\bX\in\R^{m\times n}$, we use $\bx_i$ to denote its $i$-th column, and $\operatorname{vec}(\bX):=\bigvee_{i=1}^n\bx_i$ its standard vectorization. 
    For a mapping $\F:\R^n\rightarrow\R^m$, 
    we use $\bJ_{\F}(\bx)\in\R^{m\times n}$ to denote its Jacobian at $\bx\in\R^n$. 
    For a nonzero number $x\in\R$, we use $\operatorname{sign}(x):=x/|x|$ to denote its sign; such an operation is applied in an elementwise manner when the argument is a vector. For any set $\mathbb{X}\subseteq\R^n$, we use $\operatorname{int}(\mathbb{X})$ and $\operatorname{relint}(\mathbb{X})$ respectively to denote its interior and relative interior, $\operatorname{span}(\mathbb{X})$ the subspace spanned by $\mathbb{X}$,
    $\operatorname{conv}(\mathbb{X})$ its convex hull, $\operatorname{dim}(\mathbb{X})$ its dimension, and whenever $\mathbb{X}$ is a subspace, we use $\mathcal{P}_{\mathbb{X}}(\cdot)$ to denote its orthogonal projector and $\mathbb{X}^\perp$ its orthogonal complement. When $\mathbb{X}$ is a singleton set, we also identify it with its only element. We use $\circ$ to denote function composition and $\varnothing$ the empty set. For any $\mathbfcal{X}\in\R^{n_1\times n_2\times n_3}$, we call the matrix obtained by fixing its mode-$k$ index to $i$ the $i$-th mode-$k$ slice, and we use $\operatorname{squeeze}(\mathbfcal{X})$ to denote the resulting object by removing all its dimensions with length one, which has the same elements as $\mathbfcal{X}$.

    \subsection{Generalized differentiation theory}\label{sec:diff-theory}
    In this subsection, we first introduce three different notions of subdifferentials for locally Lipschitz functions (which is the case for all examples to be studied) that will be used throughout the paper.

    \begin{definition}\label{def:subdiffs}
    Given a locally Lipschitz function $f:\R^n\rightarrow\R$ and a point $\bx\in\R^n$:
    \begin{itemize}[leftmargin=*]
        \item The Fr\'echet subdifferential~\cite[Definition~8.3(a)]{rockafellar2009variational} of $f$ at $\bx$ is defined as
        $$
            \widehat{\partial} f(\bx):=\left\{\bs \in \R^n: \liminf _{\substack{\by \rightarrow \bx,\, \by\neq\bx}} \frac{f(\by)-f(\bx)-\langle\bs, \by-\bx\rangle}{\|\by-\bx\|} \geq 0\right\}.
        $$
        \item The limiting subdifferential~\cite[Definition~8.3(b)]{rockafellar2009variational} of $f$ at $\bx$ is defined as
        $$
        \partial f(\bx):=\limsup_{\substack{\by\rightarrow\bx ,\, f(\by)\rightarrow f(\bx)}}\widehat{\partial} f(\by),
        $$
        where the $\limsup$ is taken in the sense of 
        Painlev{\'e}-Kuratowski~\cite[Section~5.B]{rockafellar2009variational}.
        \item The Clarke subdifferential~\cite[Theorem~2.5.1]{clarke1990optimization},~\cite[Theorem~9.61]{rockafellar2009variational} of $f$ at $\bx$ is defined as
        $$
        \partial_C f(\bx):=\operatorname{conv}\left(\limsup_{\by\rightarrow\bx,\,\by\in\mathbb{D}}\{\nabla f(\by)\}\right),
        $$
        where $\mathbb{D}\subseteq\R^n$ is the set on which $f$ is differentiable.
    \end{itemize}
    \end{definition}

    For more properties and relationships of the above three subdifferentials (and beyond), we refer interested readers to~\cite[Section~8]{rockafellar2009variational},~\cite[Section~2]{clarke1990optimization},~\cite[Section~4.3]{cui2021modern}, and~\cite{li2020understanding}. In addition, we would like to recommend the following references~\cite{tian2021hardness,tian2022computing,tian2022finite,tian2023testing,tian2024no} that are helpful for strengthening the understanding of these concepts.


    

Given a composite function $f:=g\circ\HI$ where $\HI:\R^n\rightarrow\R^m$ is strictly differentiable and $g:\R^m\rightarrow\R$ is locally Lipschitz, the subdifferential chain rule pertains to the relationship between the following two sets $\partial_{\triangleleft} f(\bx)$ and $\bJ^\T_{\HI}(\bx) \partial_{\triangleleft} g(\HI(\bx))$ at some interested point $\bx\in\R^n$, where $\partial_{\triangleleft}$ can be either $\widehat{\partial}$, $\partial$, or $\partial_C$ (cf.~Definition~\ref{def:subdiffs}). In this paper, we mainly focus on the Clarke subdifferential chain rule, i.e., $\partial_{\triangleleft}=\partial_C$. It is well-known that under these assumptions,
we always have the following set inclusion $\partial_C f(\bx)\subseteq\bJ^\T_{\HI}(\bx) \partial_C g(\HI(\bx))$; see~\cite[Theorem~2.3.10]{clarke1990optimization}. Therefore, what we are really interested in are the circumstances under which the above inclusion actually holds as an equality. 
(In the sequel, for simplicity, we abbreviate the above event as ``the subdifferential chain rule holds.'') 
The local surjection property~\cite{ioffe1987local} as defined below is a critically important concept for our purposes as it suffices to guarantee the validity of the subdifferential chain rule\footnote{We remark that such a property may not be sufficient for the Fr\'echet and limiting subdifferential chain rules.}~\cite[Theorem~2.3.10]{clarke1990optimization} and is much weaker than surjectivity, as evidenced by the inverse function theorem.

\begin{definition}\label{def:lsp}
    The mapping $\F:\R^n\rightarrow\R^m$ is said to have the local surjection property at $\bx\in\R^n$ if for any $t>0$, there exists some $r>0$, such that $\F(\bx)+r\bbB\subseteq\F(\bx+t\bbB)$.
\end{definition}

We remark that apart from our purposes,
the local surjection property also plays an important role in second-order variational analysis (see, e.g.,~\cite[Theorem~2.33]{mordukhovich2024second}) and differentiable manifolds (see, e.g.,~\cite{lee2012smooth}). Besides, it also has close connections to various properties such as the Aubin property, metric regularity, and coderivative nonsingularity; see, e.g.,~\cite[Theorem~9.43]{rockafellar2009variational}.


    \subsection{Prototypical examples}\label{sec:examples}
        In this subsection, we introduce in detail the prototypical examples mentioned in Section~\ref{sec:introduction}.

    \subsubsection{Matrix factorization and tensor CP factorization}\label{sec:mf-cp}
    Matrix factorization~\cite{haeffele2019structured} is a fundamental technique for a variety of modern data analytics tasks such as signal processing~\cite{fu2016power}, network analytics~\cite{bai2024dual}, and recommender systems~\cite{zeng2023msbpr}. It also serves as a building block for many other more sophisticated learning paradigms including low-rank matrix recovery~\cite{li2020nonconvex} and deep linear networks~\cite{yaras2023law}. Given $\bZ\in\R^{m\times n}$, MF aims at decomposing it into two factors $\bX\in\R^{d\times m}$ and $\bY\in\R^{d\times n}$ for some $d\in\N$ such that $\bZ\approx\bX^{\T}\bY$. Most methods for finding such a factorization resort to
    optimizing some objective function $\ell(\bX^{\T}\bY;\bZ)$,
where $\ell(\bullet;\bZ):\mathbb{R}^{m\times n}\rightarrow\mathbb{R}$ encodes the error between $\bZ$ and the factorization (e.g., $\ell(\bullet;\bZ)=\|\bullet-\bZ\|_{\TF}^2$).

In a similar (but generalized) vein, the tensor CP factorization~\cite{kolda2009tensor} intends to approximate some tensor $\mathbfcal{T}\in\R^{n_1\times n_2\times n_3}$ by $\llbracket\bX,\bY,\bZ\rrbracket\in\R^{n_1\times n_2\times n_3}$, where $\bX\in\R^{d\times n_1},\bY\in\R^{d\times n_2},\bZ\in\R^{d\times n_3}$, and
$$
(\llbracket\bX,\bY,\bZ\rrbracket)_{i j k}:=\langle\bx_i,\by_j,\bz_k\rangle,\quad\text{for all $(i,j,k)\in[n_1]\times[n_2]\times[n_3]$}.
$$ 
Similarly, a general approach for finding such factors is to minimize $\ell(\llbracket\bX,\bY,\bZ\rrbracket;\mathbfcal{T})$.

    \subsubsection{Factorization machine}\label{sec:fm}
    Factorization machine~\cite{rendle2010factorization} is a new type of simple, efficient, yet powerful second-order supervised predictor. Given a data vector $\boldsymbol{x}\in\mathbb{R}^{d_0}$ for some $d_0\in\N$, FM can be formally expressed as
\begin{equation}
\label{eq:c-fm}
    f_{\textnormal{FM}}(\boldsymbol{x} ;\boldsymbol{w}, \boldsymbol{P}):=\boldsymbol{w}^{\T}\boldsymbol{x}+\sum_{i=1}^{d_0-1}\sum_{j=i+1}^{d_0}(\boldsymbol{P}^{\T}\boldsymbol{P})_{i j} \cdot x_{i} x_{j},
\end{equation}
where $\boldsymbol{w}\in\mathbb{R}^{d_0}$ and $\boldsymbol{P}\in\mathbb{R}^{d\times d_0}$ are model parameters. One can clearly recognize that the weights of second-order feature interactions, namely $(\boldsymbol{P}^{\T}\boldsymbol{P})_{i j}$'s, are in a factorized manner, and this is the key for FMs to accelerate their computation and optimization, save their memory usages, and improve their generalization performance. 
It is well-known that $f_{\textnormal{FM}}(\boldsymbol{x} ;\boldsymbol{w}, \boldsymbol{P})$ is multiaffine in the columns of $\bP$; see, e.g.~\cite[Lemma~2]{blondel2016polynomial}.
Given a dataset $\{(\boldsymbol{x}_i,y_i)\}_{i=1}^n\subseteq\R^{d_0}\times\R$ with $n\in\N$ data samples, the objective function for training an FM is typically given by $\ell(\bigvee_{i=1}^n f_{\textnormal{FM}}(\boldsymbol{x}_i;\boldsymbol{w}, \boldsymbol{P});\by)$.
In this paper, as in~\cite{blondel2016polynomial}, we mainly focus on the homogeneous FM that excludes the linear term $\boldsymbol{w}^{\T}\boldsymbol{x}$.
For a comprehensive treatment of the theoretical foundations of FMs, we refer interested readers to~\cite{blondel2015convex,blondel2016polynomial,blondel2016higher,blondel2017multi,atarashi2021factorization}.

    \section{Local surjection property and subdifferential chain rule}\label{sec:main-results}
    In this section, we study the local surjection property of the examples mentioned in Section~\ref{sec:examples} and its implications for subdifferential chain rules. We begin with a negative observation.
    
    \subsection{A failure of the subdifferential chain rule of matrix factorization}
    It is very natural to ask if the subdifferential chain rules of the aforementioned examples actually always hold at all. The following example, which is constructed for the MF problem (and its adaption to the FM problem is straightforward), answers the above question in the negative.
    
    \begin{example}\label{ex:negative}
        Consider the composite function 
        $f:=g\circ \HI$ from $\R^4$ to $\R$, where $\HI:\R^4\rightarrow \R^4$ with
        $$
        \HI(x_1,x_2,y_1,y_2):=(x_1 y_1, x_2 y_1, x_1 y_2, x_2 y_2)^{\T},
        $$
        and $g:\R^4\rightarrow\R$ with
        $$
        g(z_1, z_2, z_3, z_4):=\sigma(z_1 z_4)-\sigma(z_2 z_3),\quad\text{where $\sigma:=\max\{\bullet-1,0\}$}.
        $$
        (We remark that by~\cite[Theorem~9.7,~Exercise~9.8(b-c)]{rockafellar2009variational}, $g$ is locally Lipschitz.) It is easy to notice that $f$ is actually the zero function, and thus by~\cite[Exercise~8.8(b)]{rockafellar2009variational} and~\cite[Proposition~4.3.2(b)]{cui2021modern} we know $\partial_C f(x_1,x_2,y_1,y_2)=\{\bd{0}\}$ for any $(x_1,x_2,y_1,y_2)^{\T}\in\R^4$. On the other hand, by using~\cite[Proposition~2.5]{rockafellar1985extensions}, the Clarke regularity of $\sigma$~\cite[Proposition~2.3.6(b)]{clarke1990optimization}, the strict differentiability of $(x,y)\mapsto x y$~\cite[Corollary~2.2.1]{clarke1990optimization}, and~\cite[Proposition~2.3.1,~Theorem~2.3.10]{clarke1990optimization}, we can show
        $$
        \partial_C g(z_1, z_2, z_3, z_4)=\{(x z_4,-y z_3,-y z_2,x z_1)^{\T}:x\in\partial_C\sigma(z_1 z_4),\,y\in\partial_C\sigma(z_2 z_3)\}.
        $$
        However, this implies
        $$
        \bJ^\T_{\HI}(\bd{1}) \partial_C g(\HI(\bd{1}))=\begin{pmatrix}1& 0& 1& 0 \\ 0& 1& 0& 1 \\ 1& 1& 0& 0 \\ 0& 0& 1& 1
        \end{pmatrix}\cdot\left\{\begin{pmatrix}x \\ -y \\ -y \\ x\end{pmatrix}:x,y\in[0,1]\right\}=\left\{(x-y)\cdot\bd{1}:x,y\in[0,1]\right\}\neq\{\bd{0}\},
        $$
        and thus the subdifferential chain rule of $g\circ\HI$ fails at $\bd{1}$.
    \end{example}

    Although Example~\ref{ex:negative} delivers some negative message, it also confirms the necessity and importance to study the subdifferential chain rules for these problems. Interestingly, we will soon see that as long as each scalar variable of $f$ in Example~\ref{ex:negative} (such as $x_1$) is replaced by some vector variable with moderate dimension, the corresponding subdifferential chain rule will hold everywhere.

\subsection{Local surjection property, subdifferential chain rule of matrix factorization}\label{sec:lsp-mf}

In this subsection, we study the local surjection property of the following mapping
\begin{equation}\label{eq:mapping-MF}
\HI_{\operatorname{MF}}(\bX,\bY):=\bX^{\T}\bY,\quad\text{from $\R^{d\times m}\times \R^{d\times n}$ to $\R^{m\times n}$},
\end{equation}
where $m,n,d\in\N$ are arbitrary. Clearly, this mapping corresponds to the MF example introduced in Section~\ref{sec:mf-cp}.
To begin, we give a sufficient condition for the local surjection property of $\HI_{\operatorname{MF}}$ (\ref{eq:mapping-MF}) at the origin that depends on the latent dimension $d$ only.

\begin{proposition}\label{prop:surj-mf}
    Provided $d\ge\min\{m,n\}$, 
    the mapping $\HI_{\operatorname{MF}}$ (\ref{eq:mapping-MF})
    is locally surjective at the origin.
\end{proposition}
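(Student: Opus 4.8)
The plan is to apply Definition~\ref{def:lsp} directly. Since $\HI_{\operatorname{MF}}(\bO,\bO)=\bO$, it suffices to show that for every $t>0$ there exists $r>0$ with $r\bbB\subseteq\HI_{\operatorname{MF}}(t\bbB)$, where $\bbB$ is the unit ball of the product space $\R^{d\times m}\times\R^{d\times n}$ under the norm $\|(\bX,\bY)\|=(\|\bX\|_{\TF}^2+\|\bY\|_{\TF}^2)^{1/2}$. Equivalently, I must exhibit, for every target matrix $\bW\in\R^{m\times n}$ of sufficiently small Frobenius norm, an explicit factorization $\bW=\bX^{\T}\bY$ obeying the budget $\|\bX\|_{\TF}^2+\|\bY\|_{\TF}^2\le t^2$.

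Without loss of generality assume $m=\min\{m,n\}$, so that $d\ge m$. The key construction is to place a scaled identity in $\bX$ and a scaled copy of the target in $\bY$, each padded with zeros to fill the $d$ available rows. Concretely, for a scaling parameter $\alpha>0$ to be chosen, I let $\bX\in\R^{d\times m}$ have $\alpha\bI_m$ as its top $m\times m$ block and zeros below, and I let $\bY\in\R^{d\times n}$ have $\alpha^{-1}\bW$ as its top $m\times n$ block and zeros below. Then $\bX^{\T}=[\,\alpha\bI_m\mid\bO\,]$, and a direct computation gives $\bX^{\T}\bY=\alpha\bI_m\cdot\alpha^{-1}\bW=\bW$, so the factorization is exact for every choice of $\alpha$.

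It then remains to control the size of the factors. With this construction $\|\bX\|_{\TF}^2=\alpha^2 m$ and $\|\bY\|_{\TF}^2=\alpha^{-2}\|\bW\|_{\TF}^2$, hence $\|\bX\|_{\TF}^2+\|\bY\|_{\TF}^2=\alpha^2 m+\alpha^{-2}\|\bW\|_{\TF}^2$. Minimizing over $\alpha$ via the arithmetic--geometric mean inequality, the balanced choice $\alpha^2=\|\bW\|_{\TF}/\sqrt{m}$ yields the value $2\sqrt{m}\,\|\bW\|_{\TF}$ (and when $\bW=\bO$ one simply takes $\bX=\bY=\bO$). Therefore, setting $r:=t^2/(2\sqrt{m})=t^2/(2\sqrt{\min\{m,n\}})$, every $\bW$ with $\|\bW\|_{\TF}\le r$ admits a factorization meeting the budget $\|\bX\|_{\TF}^2+\|\bY\|_{\TF}^2\le t^2$, which gives $r\bbB\subseteq\HI_{\operatorname{MF}}(t\bbB)$ and completes the argument.

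I do not anticipate a genuine obstacle, since the construction is fully explicit; the one point worth emphasizing is the quadratic dependence $r\sim t^2$ of the output radius on the perturbation budget. This is exactly the regime in which the classical Jacobian-based surjectivity criterion is unavailable — the Jacobian of the bilinear map $\HI_{\operatorname{MF}}$ vanishes at the origin — so the inverse function theorem cannot be invoked and an explicit construction is essential. The balancing parameter $\alpha$ is precisely the mechanism that converts the small domain budget $t$ into the (order $t^2$) image radius $r$, which also illustrates why the local surjection property can hold even where surjectivity of the linearization fails.
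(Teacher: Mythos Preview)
Your proof is correct and follows essentially the same idea as the paper's: embed a scaled identity in one factor (using $d\ge\min\{m,n\}$) and carry the target matrix in the other. The only difference is cosmetic---you optimize the scaling $\alpha$ per target $\bW$ to obtain $r=t^2/(2\sqrt{\min\{m,n\}})$, whereas the paper fixes the scaling independently of the target and obtains the slightly weaker $r=t^2/(2\sqrt{mn})$.
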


\begin{proof}
    For any $t>0$, let us consider some point $(\bX,\bY)\in t\bbB$ that is subject to change. 
    Due to symmetry, we assume w.l.o.g.\ that $m\ge n$. Then, by fixing $\by_j:=(t/\sqrt{2n})\cdot\be_j$, which is legitimate due to the assumption of $d$, we have
    $$
    \HI_{\operatorname{MF}}(t\bbB)\supseteq\left\{(t/\sqrt{2n})\cdot\bX^{\T}\cdot(\bI_n\vee\bO_{d-n,n}):\|\bX\|_{\TF}\le t/\sqrt{2}\right\}.
    $$ 
    Thus, by fixing the last $d-n$ coordinates of $\bx_i$ to be zero for each $i\in[m]$, varying the remaining coordinates of each $\bx_i$ over $(t/\sqrt{2m})\cdot\bbB$,
    and using the easily-verifiable fact that
    \begin{equation}\label{eq:easy-fact}
    \text{for any $k\in\N$, $\bz_1,\ldots,\bz_k\in\R^n$, and $r>0$, it always holds that $\bigvee_{i=1}^k\bz_i+r\bbB\subseteq\prod_{i=1}^k(\bz_i+r\bbB)$},
    \end{equation}
    we can conclude $\HI_{\operatorname{MF}}(t\bbB)\supseteq(t^2/\sqrt{4mn})\cdot\bbB$, which completes the proof.
\end{proof}

Despite its simplicity, as a direct application, we can obtain the following interesting corollary concerning the three subdifferentials in Definition~\ref{def:subdiffs} of any MF-like problem at the origin.


\begin{corollary}\label{cor:mf-origin}
    Suppose that $\ell:\R^{m\times n}\rightarrow\R$ is locally Lipschitz and $d\ge\min\{m,n\}$, and let 
    \begin{equation}\label{eq:composition-function-MF}
    f(\bX,\bY):=\ell(\bX^{\T}\bY),\quad\text{from $\R^{d\times m}\times \R^{d\times n}$ to $\R$}.
    \end{equation}
    Then, it holds that\footnote{Here, we have identified $\R^{d\times m}\times \R^{d\times n}$ with $\R^{d\times (m+n)}$ to avoid notational redundancy (otherwise we should write, e.g., $\partial_C f(\bO,\bO)$). Such a convention will be often adopted in the sequel as long as there is no ambiguity.}
    $$
        \partial_C f(\bO)=\partial f(\bO)=\widehat{\partial} f(\bO)=\{\bO\},
    $$
    and $f$ is strictly differentiable at the origin with $\nabla f(\bO)=\bO$.
\end{corollary}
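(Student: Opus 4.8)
The plan is to extract the entire statement from a single computation of $\partial_C f(\bO)$ via the Clarke chain rule, and then bootstrap to the remaining two subdifferentials and to strict differentiability. First I would record the two structural facts about the inner map: since $\HI_{\operatorname{MF}}(\bX,\bY)=\bX^\T\bY$ is bilinear it is smooth, hence strictly differentiable, and its differential at the origin sends a direction $(\bU,\bV)$ to $\bU^\T\bO+\bO^\T\bV=\bO$, so that $\bJ_{\HI_{\operatorname{MF}}}(\bO)=\bO$. Combining the hypothesis $d\ge\min\{m,n\}$ with Proposition~\ref{prop:surj-mf} shows that $\HI_{\operatorname{MF}}$ enjoys the local surjection property at the origin.

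Since $\HI_{\operatorname{MF}}$ is strictly differentiable, $\ell$ is locally Lipschitz, and the local surjection property holds, the Clarke subdifferential chain rule holds with \emph{equality} (\cite[Theorem~2.3.10]{clarke1990optimization}). Because $\ell$ is locally Lipschitz, $\partial_C\ell(\bO)$ is a nonempty (compact, convex) set, so that
$$
\partial_C f(\bO)=\bJ^\T_{\HI_{\operatorname{MF}}}(\bO)\,\partial_C\ell\big(\HI_{\operatorname{MF}}(\bO)\big)=\bO\cdot\partial_C\ell(\bO)=\{\bO\}.
$$
Thus the Clarke subdifferential at the origin collapses to a single point.

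The hard part will be propagating this to the Fr\'echet and limiting subdifferentials and to strict differentiability, and this is the main point requiring care: the local surjection property underwrites only the Clarke chain rule, not the Fr\'echet or limiting versions, so I cannot simply repeat the argument for $\widehat{\partial}$ and $\partial$. Instead I would invoke the characterization that a locally Lipschitz function is strictly differentiable at a point precisely when its Clarke subdifferential there is a singleton (\cite[Proposition~2.2.4]{clarke1990optimization}); applied to the display above, this yields that $f$ is strictly differentiable at $\bO$ with $\nabla f(\bO)=\bO$. Differentiability then forces $\widehat{\partial}f(\bO)=\{\nabla f(\bO)\}=\{\bO\}$ (\cite[Exercise~8.8(b)]{rockafellar2009variational}), and the standard inclusions $\widehat{\partial}f(\bO)\subseteq\partial f(\bO)\subseteq\partial_C f(\bO)=\{\bO\}$ squeeze $\partial f(\bO)$ to $\{\bO\}$ as well, completing the proof.

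Finally, I would note that strict differentiability at $\bO$ with zero gradient can alternatively be verified directly, bypassing the local surjection property entirely: writing $\bU^\T\bV-\bX^\T\bY=(\bU-\bX)^\T\bV+\bX^\T(\bV-\bY)$ and using the local Lipschitzness of $\ell$ gives, near the origin, $|f(\bU,\bV)-f(\bX,\bY)|\le L\,(\|\bV\|_{\TF}+\|\bX\|_{\TF})\,\|(\bU,\bV)-(\bX,\bY)\|$, whose quotient by $\|(\bU,\bV)-(\bX,\bY)\|$ vanishes as both points approach $\bO$. This elementary route needs no assumption on $d$, which suggests that the hypothesis $d\ge\min\{m,n\}$ serves here only to present the result as an application of Proposition~\ref{prop:surj-mf}.
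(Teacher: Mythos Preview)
Your proof is correct and follows essentially the same route as the paper: invoke Proposition~\ref{prop:surj-mf} plus the equality form of Clarke's chain rule to obtain $\partial_C f(\bO)=\{\bO\}$, then use the singleton-Clarke-subdifferential criterion for strict differentiability (the paper cites \cite[Proposition~4.3.4(ii)]{cui2021modern} for this package, while you unpack it via \cite[Proposition~2.2.4]{clarke1990optimization} and the inclusions $\widehat{\partial}\subseteq\partial\subseteq\partial_C$). Your closing remark that the conclusion holds without the hypothesis $d\ge\min\{m,n\}$ anticipates the paper's next result, Corollary~\ref{cor:zero-subdiff}; there the paper argues slightly differently, using only the inclusion $\partial_C f(\bO)\subseteq\bJ^\T_{\HI_{\operatorname{MF}}}(\bO)\,\partial_C\ell(\bO)=\{\bO\}$ (which requires no local surjection) together with nonemptiness, rather than your direct Lipschitz estimate.
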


\begin{proof}
    We first observe $f=\ell\circ\HI_{\operatorname{MF}}$,
    and thus by Proposition~\ref{prop:surj-mf}, the assumption of $d$, and~\cite[Theorem~2.3.10]{clarke1990optimization}, we know the subdifferential chain rule holds for this composition at the origin. Since the Jacobian of $\HI_{\operatorname{MF}}$ at the origin is zero, the characterization of $\partial_C f(\bO)$ directly follows from the nonemptiness of Clarke subdifferentials~\cite[Proposition~2.1.2(a)]{clarke1990optimization} (see also~\cite[Proposition~4.3.1(d)]{cui2021modern}).
    This, together with~\cite[Proposition~4.3.4(ii)]{cui2021modern}, further implies the remaining results, as desired.
\end{proof}

Inspired by Corollary~\ref{cor:mf-origin}, we realize it is possible to establish something even stronger.

\begin{corollary}\label{cor:zero-subdiff}
    Corollary~\ref{cor:mf-origin} still holds in absence of the condition $d\ge\min\{m,n\}$.
\end{corollary}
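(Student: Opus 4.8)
The plan is to bypass the local surjection property entirely---this is necessary, because Proposition~\ref{prop:surj-mf} can genuinely fail once $d<\min\{m,n\}$ (a thin enough $\HI_{\operatorname{MF}}$ cannot cover a full neighborhood of the origin)---and instead to prove directly that $f$ is strictly differentiable at the origin with $\nabla f(\bO)=\bO$. Once this is secured, the equalities $\partial_C f(\bO)=\partial f(\bO)=\widehat{\partial} f(\bO)=\{\bO\}$ follow immediately from~\cite[Proposition~4.3.4(ii)]{cui2021modern}, exactly as in the closing line of the proof of Corollary~\ref{cor:mf-origin}; crucially, that final step never invokes the hypothesis on $d$.

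The heart of the argument is that the bilinear map $\HI_{\operatorname{MF}}$ is ``quadratically flat'' at the origin, so that composing it with a merely Lipschitz $\ell$ cannot create any first-order variation. Concretely, I would start from the elementary identity
\[
\bX^{\T}\bY-(\bX')^{\T}\bY'=\bX^{\T}(\bY-\bY')+(\bX-\bX')^{\T}\bY',
\]
take Frobenius norms, and apply submultiplicativity $\|\bA\bB\|_{\TF}\le\|\bA\|_{\TF}\|\bB\|_{\TF}$ to obtain
\[
\|\HI_{\operatorname{MF}}(\bX,\bY)-\HI_{\operatorname{MF}}(\bX',\bY')\|_{\TF}\le\|\bX\|_{\TF}\|\bY-\bY'\|_{\TF}+\|\bX-\bX'\|_{\TF}\|\bY'\|_{\TF}.
\]
Restricting both points to the ball of radius $\delta$ about the origin forces $\|\bX\|_{\TF},\|\bY'\|_{\TF}\le\delta$, so the right-hand side is at most $\sqrt{2}\,\delta\,\|(\bX,\bY)-(\bX',\bY')\|_{\TF}$ by Cauchy--Schwarz. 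Writing $L$ for a local Lipschitz modulus of $\ell$ about $\HI_{\operatorname{MF}}(\bO)=\bO$ (valid since $\HI_{\operatorname{MF}}$ maps a small ball into a small neighborhood of the zero matrix) and composing, I would conclude that for all distinct $(\bX,\bY),(\bX',\bY')$ in that ball,
\[
\frac{|f(\bX,\bY)-f(\bX',\bY')|}{\|(\bX,\bY)-(\bX',\bY')\|_{\TF}}\le\sqrt{2}\,L\,\delta.
\]

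Since the bound $\sqrt{2}\,L\,\delta$ tends to $0$ as $\delta\downarrow 0$, the defining limit of strict differentiability holds at the origin with strict derivative $\bO$; in particular $f$ is strictly differentiable there with $\nabla f(\bO)=\bO$. Invoking~\cite[Proposition~4.3.4(ii)]{cui2021modern} then collapses all three subdifferentials to $\{\bO\}$, which is precisely the assertion. I do not anticipate any real obstacle here: the sole conceptual point is recognizing that the quadratic smallness of $\HI_{\operatorname{MF}}$ near the origin---rather than any surjectivity---is what truly drives Corollary~\ref{cor:mf-origin}, and this smallness is wholly insensitive to the value of $d$.
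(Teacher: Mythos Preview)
Your proof is correct, but it takes a genuinely different route from the paper's. The paper observes that the \emph{inclusion} form of Clarke's chain rule~\cite[Theorem~2.3.10]{clarke1990optimization} always holds (no local surjection needed), so $\partial_C f(\bO)\subseteq\bJ_{\HI_{\operatorname{MF}}}^{\T}(\bO)\,\partial_C\ell(\bO)=\{\bO\}$ because the Jacobian of the bilinear map vanishes at the origin; nonemptiness of $\partial_C f(\bO)$ then forces equality, and~\cite[Proposition~4.3.4(ii)]{cui2021modern} yields strict differentiability and the remaining equalities. You instead bypass the chain rule machinery entirely and prove strict differentiability \emph{first}, by a direct quadratic-smallness estimate on $\HI_{\operatorname{MF}}$ combined with the local Lipschitz modulus of $\ell$, and only then invoke~\cite[Proposition~4.3.4(ii)]{cui2021modern} in the reverse direction. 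Your argument is more elementary and fully self-contained---it makes transparent \emph{why} the result holds (the bilinear map is second-order flat at the origin regardless of $d$)---whereas the paper's is shorter and highlights that the phenomenon is a trivial consequence of the always-valid one-sided chain rule plus $\bJ_{\HI_{\operatorname{MF}}}(\bO)=\bO$. Both approaches generalize immediately to the higher-order and symmetric settings mentioned in Remark~\ref{rmk:caveat}.
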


\begin{proof}
    By~\cite[Theorem~2.3.10]{clarke1990optimization} and the nonemptiness of Clarke subdifferentials, we know $\partial_C f(\bO)\subseteq\{\bO\}$. However, by (again) the nonemptiness of $\partial_C f(\bO)$, we know it must be $\{\bO\}$. The remaining results can be shown by invoking~\cite[Proposition~4.3.4(ii)]{cui2021modern} again. This completes the proof.
\end{proof}

Recall that points at which the subdifferentials include zero have special significance from an optimization perspective. In fact, we would like to make the following important observation.


\begin{remark}\label{rmk:caveat}
    Corollary~\ref{cor:zero-subdiff} caveats us that when dealing with problems that can be formulated as (\ref{eq:composition-function-MF}), no matter how bizarre the loss function $\ell$ looks like, as long as it is locally Lipschitz, we should place our goal for optimization beyond the stationarity concept
    induced by any meaningful subdifferential construction.
    This is because such a stationary point can always be obtained even without calling any oracle and performing any computation by trivially returning the origin due to Corollary~\ref{cor:zero-subdiff}, and it also hints us that for these problems, we should be able to do something better.
    (This also corroborates some recent advances on the convergence of gradient algorithms for standard MF~\cite{du2018algorithmic,ye2021global,liu2021noisy}, showing even a global minimizer can be obtained in polynomial time.) 
    On the other hand, Corollary~\ref{cor:zero-subdiff} also acknowledges the ubiquitous existence of spurious stationarity in this type of problems, since in almost all cases the origin will not be a desired solution as it carries no information at all.
    We would also like to stress that Corollary~\ref{cor:zero-subdiff} and its implications also hold for their symmetric and higher-order counterparts and even beyond,
    shedding new light on the usefulness
    of the convergence guarantees of many existing algorithms, such as~\cite[Theorem~5(a)]{hastie2015matrix},~\cite[Theorem~4.2]{li2015convergence},~\cite[Theorem~1]{wang2017polynomial},~\cite[Theorem~2]{lin2017robust}, and~\cite[Corollaries~1-2]{zhu2018dropping}.
\end{remark}



Recall that Proposition~\ref{prop:surj-mf} concerns the local surjection property of $\HI_{\operatorname{MF}}$ (\ref{eq:mapping-MF}) only at the origin. Our next step is to generalize such a result to some arbitrary point.

\begin{theorem}\label{thm:surj-mf-anypoint}
    Given some interested point $(\bX,\bY)\in\R^{d\times m}\times \R^{d\times n}$, provided
    \begin{equation}\label{eq:surj-mf-anypoint}
    d\ge\operatorname{dim}(\operatorname{span}(\{\bx_i\}_{i=1}^m\cup\{\by_j\}_{j=1}^n))+\min\{m,n\},
    \end{equation}
    the mapping $\HI_{\operatorname{MF}}$ (\ref{eq:mapping-MF})
    is locally surjective at this point. In particular, provided
    \begin{equation}\label{eq:surj-mf-everywhere}
    d\ge m+n+\min\{m,n\},
    \end{equation}
    the mapping $\HI_{\operatorname{MF}}$ (\ref{eq:mapping-MF}) is locally surjective everywhere.
\end{theorem}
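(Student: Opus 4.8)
The plan is to reduce to the origin case already settled in Proposition~\ref{prop:surj-mf} by perturbing $(\bX,\bY)$ only in directions orthogonal to the subspace spanned by the existing columns. Concretely, set $V:=\operatorname{span}(\{\bx_i\}_{i=1}^m\cup\{\by_j\}_{j=1}^n)$ and $s:=\operatorname{dim}(V)$, so that the hypothesis~(\ref{eq:surj-mf-anypoint}) reads $d-s\ge\min\{m,n\}$. Since all the columns $\bx_i,\by_j$ lie in $V$, the orthogonal complement $W:=V^\perp$ has dimension $d-s\ge\min\{m,n\}$, which is exactly the amount of room needed to invoke the origin case.

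First I would restrict the perturbations to $W$. That is, given $t>0$, I consider only perturbed factors of the form $\bx_i+\bu_i$ and $\by_j+\bv_j$ with $\bu_i,\bv_j\in W$. The key algebraic observation is that, because $\bx_i,\by_j\in V$ and $\bu_i,\bv_j\in W=V^\perp$, all cross terms vanish:
$$
(\bx_i+\bu_i)^\T(\by_j+\bv_j)=\bx_i^\T\by_j+\bx_i^\T\bv_j+\bu_i^\T\by_j+\bu_i^\T\bv_j=\bx_i^\T\by_j+\bu_i^\T\bv_j.
$$
Collecting these entrywise, the perturbed product equals $\bX^\T\bY+\bU^\T\bV$, where $\bU,\bV$ are the matrices whose columns are the $\bu_i,\bv_j$. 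Thus the perturbation of the output is \emph{exactly} the product $\bU^\T\bV$ of the orthogonal components, decoupled from the base point.

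Next I would transport this to the origin case via an isometry. Fixing an orthonormal basis of $W$ identifies $W$ with $\R^{d-s}$ and sends $\bU,\bV$ to matrices $\widetilde{\bU}\in\R^{(d-s)\times m}$, $\widetilde{\bV}\in\R^{(d-s)\times n}$ with $\bU^\T\bV=\widetilde{\bU}^\T\widetilde{\bV}$ and $\|(\bU,\bV)\|_{\TF}=\|(\widetilde{\bU},\widetilde{\bV})\|_{\TF}$. Since $d-s\ge\min\{m,n\}$, Proposition~\ref{prop:surj-mf} applies to the mapping $(\widetilde{\bU},\widetilde{\bV})\mapsto\widetilde{\bU}^\T\widetilde{\bV}$ at the origin: for the given $t>0$ there is some $r>0$ with $r\bbB\subseteq\{\widetilde{\bU}^\T\widetilde{\bV}:\|(\widetilde{\bU},\widetilde{\bV})\|_{\TF}\le t\}$. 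Pulling this back, every point of $\bX^\T\bY+r\bbB$ is realized as $(\bX+\bU)^\T(\bY+\bV)$ with $\|(\bU,\bV)\|_{\TF}\le t$, i.e.\ with $(\bX+\bU,\bY+\bV)\in(\bX,\bY)+t\bbB$, which is precisely the local surjection property at $(\bX,\bY)$. The main conceptual step is the orthogonality trick that annihilates the cross terms and decouples the perturbation from the base point; once that is in hand, everything reduces to the already-proved origin case, and the remaining norm and isometry bookkeeping is routine.

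Finally, the ``in particular'' claim follows immediately: the spanning set $\{\bx_i\}_{i=1}^m\cup\{\by_j\}_{j=1}^n$ contains at most $m+n$ vectors, so $s\le m+n$ at every point; hence $d\ge m+n+\min\{m,n\}$ forces~(\ref{eq:surj-mf-anypoint}) to hold uniformly, yielding local surjection everywhere.
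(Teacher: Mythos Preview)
Your proposal is correct and follows essentially the same approach as the paper: restrict the perturbations to the orthogonal complement $V^\perp$ of the column span to annihilate the cross terms, then identify $V^\perp$ with $\R^{d-s}$ and invoke Proposition~\ref{prop:surj-mf} at the origin. The only difference is cosmetic bookkeeping---the paper parameterizes the perturbation as $\varepsilon\cdot(\bW,\bZ)$ with each $\bw_i,\bz_j$ in a unit ball and $\varepsilon=\min\{t/\sqrt{2m},t/\sqrt{2n}\}$, whereas you work directly with the joint Frobenius ball and an isometry---but the substance is identical.
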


\begin{proof}
    Similar to the proof of Proposition~\ref{prop:surj-mf},
    let us consider the following point
    $$
    (\bX^{\prime},\bY^{\prime}):=(\bX+\varepsilon\cdot\bW,\bY+\varepsilon\cdot\bZ)\in(\bX,\bY)+t\bbB,
    $$
    where $\bw_1,\ldots,\bw_m,\bz_1,\ldots,\bz_n\in\bbB$ are independent and subject to change, $\varepsilon:=\min\{t/\sqrt{2m}, t/\sqrt{2n}\}$, and the last inclusion can be verified easily. It is direct to see that with this parametrization 
    $$
    \HI_{\operatorname{MF}}(\bX^{\prime},\bY^{\prime})=\bX^{\T}\bY+\varepsilon\cdot\bX^{\T}\bZ+\varepsilon\cdot\bW^{\T}\bY+\varepsilon^2\cdot\bW^{\T}\bZ.
    $$
    Next, let us consider the subspace
    $$
    \bbV:=\operatorname{span}(\{\bx_i\}_{i=1}^m\cup\{\by_j\}_{j=1}^n).
    $$
    It is immediate that $\operatorname{dim}(\bbV)\le m+n$. Besides, by restricting each $\bw_i$ and $\bz_j$ to $\TP_{\bbV^\perp}(\bbB)=\bbB\cap\bbV^{\perp}$, which is a unit ball as well with dimension $d-\operatorname{dim}(\bbV)>0$ as assumed, we can eliminate all the cross terms above (i.e., $\bX^{\T}\bZ$ and $\bW^{\T}\bY$),
    which results in
    \begin{equation}\label{eq:MF-temp1}
    \HI_{\operatorname{MF}}((\bX,\bY)+t\bbB)\supseteq\bX^{\T}\bY+\left\{\varepsilon^2\cdot\bW^{\T}\bZ:\bw_i\in\TP_{\bbV^\perp}(\bbB),\,\forall\,i\in[m],\,\bz_j\in\TP_{\bbV^\perp}(\bbB),\,\forall\, j\in[n]\right\}.
    \end{equation}
    In the following, because we only care about the inner products between $\bw_i$'s and $\bz_j$'s, which are coordinate independent, we may assume w.l.o.g.\ that $\TP_{\bbV^\perp}(\bbB)$ is aligned with the standard bases, and henceforth $\bw_1,\ldots,\bw_m,\bz_1,\ldots,\bz_n\in\R^{d-\operatorname{dim}(\bbV)}$. To conclude the proof, it suffices to apply Proposition~\ref{prop:surj-mf} to the last set in (\ref{eq:MF-temp1}), which is legitimate since $d-\operatorname{dim}(\bbV)\ge\min\{m,n\}$.  
\end{proof}

We remark that the above theorem covers Proposition~\ref{prop:surj-mf} as a special case when the interested point is the origin. Besides, the following corollary is also immediate.

\begin{corollary}\label{cor:mf-chain-rule}
    Provided (\ref{eq:surj-mf-everywhere}) holds,
    the subdifferential chain rule of (\ref{eq:composition-function-MF}) holds everywhere.
\end{corollary}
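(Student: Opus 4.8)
The plan is to derive this corollary directly by feeding the everywhere local surjection property established in Theorem~\ref{thm:surj-mf-anypoint} into the general chain rule principle recorded in Section~\ref{sec:diff-theory}, namely that the local surjection property of the inner mapping suffices for the equality-type Clarke subdifferential chain rule (cf.~\cite[Theorem~2.3.10]{clarke1990optimization}). First I would rewrite $f=\ell\circ\HI_{\operatorname{MF}}$ as in (\ref{eq:composition-function-MF}) and verify that the standing hypotheses of that chain rule theorem are in force: the inner mapping $\HI_{\operatorname{MF}}(\bX,\bY)=\bX^{\T}\bY$ is a bilinear (hence $C^{\infty}$) polynomial mapping, so it is strictly differentiable at every point, and $\ell$ is locally Lipschitz by assumption.

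Next I would fix an arbitrary point $(\bX,\bY)\in\R^{d\times m}\times\R^{d\times n}$. Since (\ref{eq:surj-mf-everywhere}) holds, Theorem~\ref{thm:surj-mf-anypoint} guarantees that $\HI_{\operatorname{MF}}$ is locally surjective at $(\bX,\bY)$. Invoking \cite[Theorem~2.3.10]{clarke1990optimization}, the local surjection property, combined with the strict differentiability of $\HI_{\operatorname{MF}}$ and the local Lipschitzness of $\ell$, upgrades the generic inclusion $\partial_C f(\bX,\bY)\subseteq\bJ^{\T}_{\HI_{\operatorname{MF}}}(\bX,\bY)\,\partial_C\ell(\HI_{\operatorname{MF}}(\bX,\bY))$ to an equality at this point. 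Because the point was arbitrary, the subdifferential chain rule of (\ref{eq:composition-function-MF}) holds everywhere, which is the assertion.

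Essentially no obstacle remains at this stage, as all the substantive work was already carried out in Theorem~\ref{thm:surj-mf-anypoint}. The only point warranting a moment's care is to confirm that (\ref{eq:surj-mf-everywhere}) is precisely the \emph{uniform} condition under which Theorem~\ref{thm:surj-mf-anypoint} delivers local surjectivity at \emph{every} point, as opposed to the point-dependent bound (\ref{eq:surj-mf-anypoint}); this is immediate since $\operatorname{dim}(\operatorname{span}(\{\bx_i\}_{i=1}^m\cup\{\by_j\}_{j=1}^n))\le m+n$ regardless of the chosen point, so (\ref{eq:surj-mf-everywhere}) implies (\ref{eq:surj-mf-anypoint}) everywhere.
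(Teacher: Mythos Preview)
Your proposal is correct and matches the paper's own proof, which simply cites Theorem~\ref{thm:surj-mf-anypoint} together with \cite[Theorem~2.3.10]{clarke1990optimization}. The additional checks you spell out (strict differentiability of $\HI_{\operatorname{MF}}$, local Lipschitzness of $\ell$, and that (\ref{eq:surj-mf-everywhere}) implies (\ref{eq:surj-mf-anypoint}) at every point) are exactly the hypotheses implicitly used in that one-line argument.
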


\begin{proof}
This directly follows from the combination of Theorem~\ref{thm:surj-mf-anypoint} and~\cite[Theorem~2.3.10]{clarke1990optimization}.
\end{proof}

Another remark on some specializations of Theorem~\ref{thm:surj-mf-anypoint} is also in order.

\begin{remark}\label{rmk:special-case}
    It is easy 
    to see that the conditions in Theorem~\ref{thm:surj-mf-anypoint} are also sufficient for the local surjection property of each submapping
    $$
    \HI_{\operatorname{MF}}^{\mathbb{D}}(\bX,\bY):=\bigvee_{(i,j)\in\mathbb{D}}\bx_i^{\T}\by_j,\quad\text{from $\R^{d\times m}\times \R^{d\times n}$ to $\R^{|\mathbb{D}|}$, where $\mathbb{D}\subseteq [m]\times[n]$ is arbitrary}.
    $$
    Besides, this observation also applies to all other prototypical examples to be studied later.
\end{remark}

At this position, it is very natural to ask if the mapping $\HI_{\operatorname{MF}}$ (\ref{eq:mapping-MF}) actually possesses the local surjection property for all $m,n,d\in\N$ and interested points at all. The following example, whose construction is largely inspired by the computation of the Vapnik-Chervonenkis dimension of linear classifiers in $\R^2$ (see, e.g.,~\cite[Example~8.3.3]{vershynin2018high}), answers the above question in the negative.

\begin{example}\label{ex:mf1}
    Consider the case where $m,n=2$ and $d=1$.
    We manage to refute the local surjection property of $\HI_{\operatorname{MF}}$ (\ref{eq:mapping-MF}) at the origin by showing
    $$
    \operatorname{vec}(\HI_{\operatorname{MF}}(\R^{1\times 2}\times\R^{1\times 2}))\cap\operatorname{int}(\R^3_+\times\R_-)=\varnothing.
    $$
    To this end, suppose that there are some $\bx^{\star},\by^{\star}\in\R^2$ with $\operatorname{vec}(\bx^{\star}{\by^{\star}}^{\T})\in\operatorname{int}(\R^3_+\times\R_-)$. By unrolling the definition of the above inclusion, we know $x^{\star}_1,x^{\star}_2,y^{\star}_1,y^{\star}_2\neq 0$, and hence the pairs $(x^{\star}_1,y^{\star}_1)$, $(x^{\star}_1,y^{\star}_2)$, and $(x^{\star}_2,y^{\star}_1)$ all share the same sign.
    This in turn implies the pair $(x^{\star}_2,y^{\star}_2)$ also shares the same sign, and thus $x^{\star}_2 y^{\star}_2>0$. However, from the assumption we also know $x^{\star}_2 y^{\star}_2<0$, which is a contradiction. 
    As a consequence, since the Lebesgue measure of $r\bbB \cap\operatorname{int}(\R^3_+\times\R_-)$ is positive for any $r>0$ (actually, it is $1/16$ of the Lebesgue measure of $r\bbB$), $\HI_{\operatorname{MF}}(t\bbB)$ can never enclose any ball centered at the origin for any $t>0$, thus refuting its local surjection property there.
\end{example}

The above example together with Proposition~\ref{prop:surj-mf} exactly characterizes the phase transition of the local surjection property of $\HI_{\operatorname{MF}}$ (\ref{eq:mapping-MF})
when $m,n=2$: Even though $d$ is only one smaller than $\min\{m,n\}$ in this case, the desired property no longer holds. It is then very interesting to ask if we can generalize the above example, especially its underlying philosophy, to higher dimensions. The answer is in the affirmative (despite its discovery was full of twists and turns), as shown below.

\begin{example}\label{ex:mf-general}
    Consider the case where $m=n\ge 2$ are arbitrary and $d=n-1$.
    We manage to refute the local surjection property of $\HI_{\operatorname{MF}}$ (\ref{eq:mapping-MF}) at the origin as before by showing
    $$
    \HI_{\operatorname{MF}}(\R^{(n-1)\times n}\times\R^{(n-1)\times n})\cap\bbO=\varnothing,
    $$
    where
    $$
    \operatorname{vec}(\bbO):=\operatorname{relint}\left(\left(\prod_{i=1}^{n-2}(\R^2\times\{0\}^{i-1}\times\R_+\times\{0\}^{n-2-i})\right)\times\R_+^2\times\{0\}^{n-2}\times\R_+\times\R_-\times\{0\}^{n-2}\right).
    $$ 
    \begin{wrapstuff}[r,width=.35\textwidth,abovesep=-5pt]
    \begin{equation*}
\begin{pNiceArray}{cccc|cc}
\R & \R & \cdots & \R & {\R_+} & {\R_+} \\
\R & \R & \cdots & \R & {\R_+} & {\R_-} \\
\midrule
{\R_+} & {\{0\}} & \cdots & {\{0\}} & {\{0\}} & {\{0\}} \\
{\{0\}} & {\R_+} & {\cdots} & {\{0\}} & {\{0\}} & {\{0\}} \\
{\vdots} & {\vdots} & {\ddots} & {\vdots} & {\vdots} & {\vdots} \\
{\{0\}} & {\{0\}} & {\cdots} & {\R_+} & {\{0\}} & {\{0\}}
\end{pNiceArray}
\end{equation*}
\end{wrapstuff}
\noindent (For an intuitive understanding, a visualization of $\bbO$ before taking relative interior
is given in the right panel.)
    To this end, suppose that there are some $\bX^{\star},\bY^{\star}\in\R^{(n-1)\times n}$ such that ${\bX^{\star}}^{\T}\bY^{\star}\in\bbO$. We first claim that $\bx^{\star}_3,\ldots,\bx^{\star}_n$ are linearly independent by showing the equation $\sum_{i=3}^n a_i\cdot \bx^{\star}_i=\bd{0}$ where $a_3,\ldots,a_n\in\R$ are variables to be solved has the only solution $a_3,\ldots,a_n=0$. To see this, we first observe from ${\bx^{\star}_{j+2}}^{\T}\by^{\star}_j>0$ for all $j\in[n-2]$ that $\bx^{\star}_3,\ldots,\bx^{\star}_n$ and $\by^{\star}_1,\ldots,\by^{\star}_{n-2}$ are all nonzero. Then, we multiply $\by^{\star}_j$ on both sides of the equation $\sum_{i=3}^n a_i\cdot \bx^{\star}_i=\bd{0}$ for each $j\in[n-2]$, leading to a system of equations $\sum_{i=3}^n a_i \cdot{\bx^{\star}_i}^{\T}\by^{\star}_j=0$, $j\in[n-2]$. Because ${\bx^{\star}_i}^{\T}\by^{\star}_j=0$ for all $i\in[3,n]\cap\N$ and $j\in[n-2]$ with $i-j\neq 2$ and is strictly positive otherwise due to the assumption, the above system of equations simplifies to $a_{j+2} \cdot{\bx^{\star}_{j+2}}^{\T}\by^{\star}_j=0$, $j\in[n-2]$, which has the only solution $a_3,\ldots,a_n=0$, as desired. We next observe from ${\bx^{\star}_i}^{\T}\by^{\star}_{n-1},{\bx^{\star}_i}^{\T}\by^{\star}_n=0$ for all $i\in[3,n]\cap\N$ that $\by^{\star}_{n-1},\by^{\star}_n\in(\operatorname{span}(\{\bx^{\star}_i\}_{i=3}^n))^\perp$, which is a one-dimensional subspace due to the above linear independence claim.
    Therefore, because ${\bx_1^{\star}}^{\T}\by_{n-1}^{\star}>0$ and ${\bx_1^{\star}}^{\T}\by_n^{\star}>0$, we know $\by_{n-1}^{\star}$ and $\by_n^{\star}$ are also nonzero and their directions are the same. However, this would further imply ${\bx_2^{\star}}^{\T}\by_{n}^{\star}>0$ provided ${\bx_2^{\star}}^{\T}\by_{n-1}^{\star}>0$, a contradiction to the assumption. 
\end{example}

Notice that although the above example is only restricted to the case where $m=n$, in fact, it can be trivially extended to any rectangular case by canonical embedding. Therefore, the above example, together with Proposition~\ref{prop:surj-mf}, immediately implies the following interesting corollary.

\begin{corollary}\label{cor:mf-iff}
    The map $\HI_{\operatorname{MF}}$ (\ref{eq:mapping-MF})
    is locally surjective at the origin if and only if $d\ge\min\{m,n\}$.
\end{corollary}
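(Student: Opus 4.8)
The claim is an equivalence, so the plan is to treat the two implications separately, and the ``if'' direction is already available: when $d\ge\min\{m,n\}$, Proposition~\ref{prop:surj-mf} gives local surjectivity of $\HI_{\operatorname{MF}}$ at the origin, so nothing further is needed there.

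For the ``only if'' direction I would argue the contrapositive: assuming $d\le\min\{m,n\}-1$, I would refute the local surjection property at the origin. The single observation driving everything is that the image of $\HI_{\operatorname{MF}}$ is exactly the set of rank-at-most-$d$ matrices, since any $\bX^{\T}\bY$ with $\bX\in\R^{d\times m}$ and $\bY\in\R^{d\times n}$ has rank at most $d$ and, conversely, every such matrix factors this way. Hence for every $t>0$ we have $\HI_{\operatorname{MF}}(t\bbB)\subseteq\{\boldsymbol{M}\in\R^{m\times n}:\operatorname{rank}(\boldsymbol{M})\le d\}$. When $d<\min\{m,n\}$ this rank locus is a proper determinantal variety: the full-rank matrices form an open dense set (some maximal minor is nonzero), so its complement, which contains the rank-$(\le d)$ locus, is closed and nowhere dense, hence has empty interior. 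Therefore $\HI_{\operatorname{MF}}(t\bbB)$ has empty interior and can contain no ball $r\bbB$ for any $r>0$; recalling $\HI_{\operatorname{MF}}(\bO)=\bO$, this is precisely the failure of local surjectivity at the origin, for every $m,n$ and every $d<\min\{m,n\}$ at once.

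This route already disposes of the rectangular case uniformly, but it is instructive to see how it connects to Example~\ref{ex:mf-general}, which is the path the paper advertises. That example exhibits, for $m=n$ and $d=n-1$, an explicit open cone $\bbO$ disjoint from the rank-$(\le n-1)$ locus; since the rank locus only shrinks as $d$ decreases, $\bbO$ remains disjoint from $\HI_{\operatorname{MF}}(t\bbB)$ for all $d\le n-1$, and as $\bbO$ is a cone with nonempty interior it meets every $r\bbB$, again refuting local surjectivity. To reach arbitrary $m,n$ I would assume w.l.o.g.\ $m\le n$ and transport $\bbO$ by the canonical embedding of $\R^{m\times m}$ as the leading $m\times m$ block of $\R^{m\times n}$, lifting it to the open cone $\bbO'$ of matrices whose first $m$ columns lie in $\bbO$; each such matrix then has a full-rank leading block, hence rank $m>d$. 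The only delicate point, and the sole real obstacle along this second route, is the bookkeeping that $\bbO'$ remains a full-dimensional cone touching the origin and that the full-rank block genuinely forces rank exceeding every sub-threshold $d$; granting this, combining the two implications yields the stated equivalence.
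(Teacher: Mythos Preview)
Your argument is correct, and your primary route is genuinely different from the paper's. The paper derives the ``only if'' direction by invoking Example~\ref{ex:mf-general}, which for $m=n$ and $d=n-1$ builds an explicit relatively open cone $\bbO$ disjoint from the image and then appeals to the canonical embedding to cover rectangular shapes; the corollary is stated as an immediate consequence of that example together with Proposition~\ref{prop:surj-mf}. You instead observe directly that the entire image of $\HI_{\operatorname{MF}}$ is $\R^{m\times n}_{\le d}$, a proper determinantal variety whenever $d<\min\{m,n\}$, hence of empty interior, so no $\HI_{\operatorname{MF}}(t\bbB)$ can contain a ball. This is shorter, handles all sub-threshold $d$ and all $(m,n)$ in one stroke, and avoids the bespoke combinatorics of Example~\ref{ex:mf-general}; by contrast, the paper's construction is more hands-on and exhibits a concrete unreachable region, which is useful pedagogically and foreshadows the analogous Example~\ref{ex:fm-general} for the FM map (where a simple rank obstruction is not available). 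Your secondary sketch via Example~\ref{ex:mf-general} plus embedding is exactly the paper's path, and your bookkeeping remarks about $\bbO'$ are fine: the lifted cone has nonempty interior in $\R^{m\times n}$ and every element has a full-rank leading $m\times m$ block, hence rank $m>d$.
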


We close this subsection by discussing another recent work~\cite{levin2024effect} whose results can have some implications for the local surjection property of $\HI_{\operatorname{MF}}$ (\ref{eq:mapping-MF}). (In what follows, for any $d\in\N$, we denote $\R^{m\times n}_{\le d}$ as the set of all $m\times n$ matrices with rank at most $d$). Indeed, by combining~\cite[Theorem~2.3]{levin2024effect} and~\cite[Proposition~2.8]{levin2024effect}, we can obtain an equivalent condition for the local surjection property of $\HI_{\operatorname{MF}}$ at some point $(\bX,\bY)\in\R^{d\times m}\times \R^{d\times n}$ with the codomain restricted as $\R^{m\times n}_{\le d}$ (instead of $\R^{m\times n}$) when $1\le d\le\min\{m,n\}$, namely
\begin{equation}\label{eq:levin}
\operatorname{rank}(\bX)=\operatorname{rank}(\bY)=\operatorname{rank}(\bX^{\T}\bY).
\end{equation}
We next highlight the following differences between the above condition and ours:

\begin{itemize}
    \item Because the tools developed by Clarke~\cite{clarke1990optimization} are defined and played on Banach spaces only, but $\R^{m\times n}_{\le d}$ is not even a vector space when $d\le\min\{m,n\}-1$, for our purposes, we can only apply such a result when $d=\min\{m,n\}$. Although this gives some sharper characterization of the local surjection property of $\HI_{\operatorname{MF}}$ in this case, it does not deliver any information about the situation when $d<\min\{m,n\}$. By contrast, in this paper, we have revealed a fundamental impossibility of the local surjection property of $\HI_{\operatorname{MF}}$ under such a regime even at the origin through a careful construction (cf.~Example~\ref{ex:mf-general}).
    
    \item Although we can generalize one direction of the proof of~\cite[Proposition~2.8]{levin2024effect} to see (\ref{eq:levin}) still suffices for the local surjection property of $\HI_{\operatorname{MF}}$ under the overparameterized setting\footnote{We remark that it is not possible to generalize the other direction.} (i.e., $d>\min\{m,n\}$), such a result is weaker than ours when $d\ge m+n+\min\{m,n\}$, where we have shown in Theorem~\ref{thm:surj-mf-anypoint} that $\HI_{\operatorname{MF}}$ is locally surjective everywhere in this case. Besides, during the interpolation $d\in(\min\{m,n\},m+n+\min\{m,n\})\cap\N$, it is also fairly easy to construct examples of $\bX$ and $\bY$ such that (\ref{eq:levin}) does not hold but our (\ref{eq:surj-mf-anypoint}) in Theorem~\ref{thm:surj-mf-anypoint}
    does, showing the sharpness of our results. As the simplest example, suppose that $d=\min\{m,n\}+1$, $\bX$ is an arbitrary rank-one matrix, and $\bY=\bO$. Then, we have
    $$
    \operatorname{rank}(\bX)=1\neq 0=\operatorname{rank}(\bY)=\operatorname{rank}(\bX^{\T}\bY),
    $$
    but
    $$
    \operatorname{dim}(\operatorname{span}(\{\bx_i\}_{i=1}^m\cup\{\by_j\}_{j=1}^n))+\min\{m,n\}=1+\min\{m,n\}=d,
    $$
    as desired.
\end{itemize}


\subsection{Local surjection property, subdifferential chain rule of factorization machine}
With the local surjection property of $\HI_{\operatorname{MF}}$ (\ref{eq:mapping-MF}) well studied, we next turn our attention to the FM. To begin with, we introduce
the following mapping associated with the FM
\begin{equation}\label{eq:mapping-FM}
    \HI_{\operatorname{FM}}(\bP;\ba):=\bigvee_{j>i}(a_{i j}\cdot\bp_i^{\T}\bp_j),\quad\text{from $\R^{d\times d_0}$ to $\R^{(d_0-1)d_0/2}$, where $a_{i j}\neq 0$ for all $j>i$},
\end{equation}
and we have abused our notation to denote $\ba:=\bigvee_{j>i}a_{i j}$.
Our first result in this subsection parallels that of Theorem~\ref{thm:surj-mf-anypoint}. 
Although the two examples (i.e., MF and FM) share some similarity, this parallel turns out to be highly nontrivial.

\begin{theorem}\label{thm:fm-surj1}
    Given some interested point $\bP\in\R^{d\times d_0}$, provided
    $$
    d\ge \operatorname{dim}(\operatorname{span}(\{\bp_i\}_{i=1}^{d_0}))+d_0-1,
    $$
    the mapping $\HI_{\operatorname{FM}}(\bullet;\ba)$
    (\ref{eq:mapping-FM})
    is locally surjective at this point. In particular, provided
    \begin{equation}\label{eq:surj-fm-everywhere}
    d\ge 2d_0 -1,
    \end{equation}
    the mapping $\HI_{\operatorname{FM}}(\bullet;\ba)$ (\ref{eq:mapping-FM}) is locally surjective everywhere.
\end{theorem}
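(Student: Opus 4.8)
The plan is to mirror the two-step strategy behind Theorem~\ref{thm:surj-mf-anypoint}: first reduce the local surjection property at an arbitrary point $\bP$ to the corresponding statement at the origin, and then settle the (harder) origin case by a Gram matrix completion argument. For the reduction, I would perturb $\bP$ to $\bP^{\prime}:=\bP+\varepsilon\bW$ with $\varepsilon:=t/\sqrt{d_0}$ and columns $\bw_1,\dots,\bw_{d_0}\in\bbB$ subject to change, so that $\bP^{\prime}\in\bP+t\bbB$. Expanding each output coordinate gives $a_{ij}\bp_i^{\T}\bp_j+\varepsilon a_{ij}(\bp_i^{\T}\bw_j+\bw_i^{\T}\bp_j)+\varepsilon^2 a_{ij}\bw_i^{\T}\bw_j$. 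Setting $\bbV:=\operatorname{span}(\{\bp_i\}_{i=1}^{d_0})$ and restricting each $\bw_i$ to $\TP_{\bbV^\perp}(\bbB)=\bbB\cap\bbV^\perp$ annihilates the two linear cross terms, leaving $\HI_{\operatorname{FM}}(\bP^{\prime};\ba)=\HI_{\operatorname{FM}}(\bP;\ba)+\varepsilon^2\bigvee_{j>i}a_{ij}\bw_i^{\T}\bw_j$ with each $\bw_i\in\bbV^\perp\cong\R^{d-\operatorname{dim}(\bbV)}$. Exactly as in Theorem~\ref{thm:surj-mf-anypoint}, rescaling by $\varepsilon^2$ shows it suffices to prove that the FM map on the reduced space $\R^{(d-\operatorname{dim}(\bbV))\times d_0}$ is locally surjective at the origin, which is legitimate since the hypothesis gives $d-\operatorname{dim}(\bbV)\ge d_0-1$.

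It then remains to establish the origin case: the map $\bP\mapsto\bigvee_{j>i}a_{ij}\bp_i^{\T}\bp_j$ on $\R^{d\times d_0}$ is locally surjective at the origin whenever $d\ge d_0-1$. Since each $a_{ij}\neq 0$, dividing the $(i,j)$ coordinate by $a_{ij}$ is a linear isomorphism of the target, so I may equivalently realize prescribed small off-diagonal Gram entries: given a target $(b_{ij})_{j>i}$ of small norm, I seek $\bp_1,\dots,\bp_{d_0}\in\R^d$ of small norm with $\bp_i^{\T}\bp_j=b_{ij}$ for all $j>i$. The key point is that the diagonal Gram entries are \emph{unconstrained}, so positive semidefiniteness can always be arranged, and $d_0$ vectors living in a $(d_0-1)$-dimensional space automatically produce a Gram matrix of rank at most $d_0-1$; this is why $d\ge d_0-1$ rather than $d\ge d_0$ suffices. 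Concretely, I would place the first $d_0-1$ vectors so that their $(d_0-1)\times(d_0-1)$ Gram matrix equals $\delta\bI+\bB^{\prime}$, where $\bB^{\prime}$ collects the prescribed entries $b_{ij}$ with $i<j\le d_0-1$ and $\delta\sim t^2$; a Gershgorin bound guarantees $\delta\bI+\bB^{\prime}\succ\bO$ once the $b_{ij}$ are small relative to $\delta$, so these vectors exist inside $\R^{d_0-1}\subseteq\R^d$ and are linearly independent. The last vector $\bp_{d_0}$ is then forced to satisfy the $d_0-1$ linear equations $\bp_i^{\T}\bp_{d_0}=b_{i,d_0}$, which has a unique solution in $\operatorname{span}(\{\bp_i\}_{i=1}^{d_0-1})$, and a direct estimate gives $\|\bp_{d_0}\|^2=O(\delta^{-1}\sum_i b_{i,d_0}^2)$. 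Choosing $\delta$ proportional to $t^2$ keeps every $\|\bp_i\|$ of order $t$ (hence $\bP\in t\bbB$) while letting the $b_{ij}$ range over a ball of radius of order $t^2$, which is exactly the local surjection property at the origin.

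I expect this origin case to be the main obstacle, and it is precisely the feature absent from the MF setting: here the outputs are off-diagonal entries of the positive semidefinite matrix $\bP^{\T}\bP$, so, unlike the entries of $\bX^{\T}\bY$, they cannot be prescribed freely without respecting semidefiniteness and --- when $d=d_0-1$ --- the rank constraint. The construction above sidesteps both by exploiting the free diagonal (to enforce $\succeq\bO$) and the automatic rank deficiency of $d_0$ vectors in $\R^{d_0-1}$; the delicate bookkeeping is to carry this out while keeping every column of order $t$, so that a genuine ball of radius $\sim t^2$ is covered rather than merely a lower-dimensional sliver. Finally, the displayed ``everywhere'' bound is immediate: since $\operatorname{dim}(\operatorname{span}(\{\bp_i\}_{i=1}^{d_0}))\le d_0$ for every $\bP$, the condition $d\ge 2d_0-1$ forces $d\ge\operatorname{dim}(\operatorname{span}(\{\bp_i\}_{i=1}^{d_0}))+d_0-1$ at all points, whence local surjectivity holds everywhere.
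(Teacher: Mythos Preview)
Your reduction to the origin case is identical to the paper's, and your origin argument is correct, but it follows a genuinely different route. The paper proves the origin case by an explicit ``peeling'' construction: it forces the perturbation columns $\bw_i$ to lie in nested hyperrectangles $\bbU_i$ supported on the first $i$ coordinates, fixes the diagonal entries $w^{\star}_{ii}=(\sqrt{2})^{-(d_0-i)}$, and then solves for the off-diagonal entries $w^{\star}_{ij}$ ($j>i$) recursively, bounding their size by induction over the rows. Your argument instead exploits the positive-semidefinite structure of $\bP^{\T}\bP$ directly: you realize the first $d_0-1$ columns by Cholesky-factoring the positive definite matrix $\delta\bI+\bB'$ (the free diagonals absorbing the PSD constraint), and then recover $\bp_{d_0}$ from a nonsingular linear system. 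Both arguments are, at bottom, triangular factorizations, but yours makes the conceptual reason for the threshold $d_0-1$ more transparent (free diagonals plus automatic rank deficiency of $d_0$ vectors in $\R^{d_0-1}$), while the paper's construction is more self-contained, giving explicit numerical constants without appealing to Gershgorin or matrix-inverse bounds. The one place where you should be a bit more careful is the norm of $\bp_{d_0}$: the estimate $\|\bp_{d_0}\|^2=\bb_{d_0}^{\T}(\delta\bI+\bB')^{-1}\bb_{d_0}=O(\delta^{-1}\|\bb_{d_0}\|^2)$ requires $\lambda_{\min}(\delta\bI+\bB')\gtrsim\delta$, which holds once you impose, say, $r\le\delta/(2(d_0-2))$; with $\delta$ a fixed fraction of $t^2/(d_0-1)$ this indeed yields $\sum_i\|\bp_i\|^2\le t^2$ and a covered ball of radius $\sim t^2$, so the bookkeeping closes.
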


\begin{proof}
    Similar to showing Theorem~\ref{thm:surj-mf-anypoint}, we first observe that $\forall\,t>0$, $\exists\,\varepsilon>0$, such that
    $$
    \HI_{\operatorname{FM}}(\bP+t\bbB;\ba)\supseteq\bigvee_{j>i}(a_{i j}\cdot\bp_i^{\T}\bp_j)+\varepsilon^2\cdot\left\{\bigvee_{j>i}(a_{i j}\cdot\bw_i^{\T}\bw_j):\bw_i\in\TP_{\bbV^\perp}(\bbB),\,\forall\,i\in[d_0]\right\},
    $$
    where 
    $$
    \bbV:=\operatorname{span}(\{\bp_i\}_{i=1}^{d_0}).
    $$
    Due to similar reasons, we may further assume that $\TP_{\bbV^\perp}(\bbB)$ is aligned with the standard bases. Next, we manage to apply some ``peeling'' arguments
    to conclude the proof. To begin with, let us define the following sets
    \begin{equation}\label{eq:bbU-definition}
    \bbU_i:=\left(\prod_{j=1}^i \left[-1/(\sqrt{2})^{d_0-j},1/(\sqrt{2})^{d_0-j}\right]\right)\times\{0\}^{d-\operatorname{dim}(\bbV)-i}\subseteq\R^{d-\operatorname{dim}(\bbV)},\quad\text{for $i\in[d_0-1]$}.
    \end{equation}
    (Notice that because $d-\operatorname{dim}(\bbV)\ge d_0-1$ as assumed, the above definitions are legitimate.) Because
    $$
        \sum_{j=1}^i\left(1/(\sqrt{2})^{d_0-j}\right)^2\le\sum_{j=1}^{d_0-1}\left(1/(\sqrt{2})^{d_0-j}\right)^2<
        \sum_{j=1}^{\infty}\frac{1}{2^j}=1,\quad\text{for every $i\in[d_0-1]$},
    $$
    we know $\bbU_i\subseteq\TP_{\bbV^\perp}(\bbB)$ for all $i\in[d_0-1]$. Hence, it holds that
    $$
    \HI_{\operatorname{FM}}(\bP+t\bbB;\ba)\supseteq\bigvee_{j>i}(a_{i j}\cdot\bp_i^{\T}\bp_j)+\varepsilon^2\cdot\left\{\bigvee_{j>i}(a_{i j}\cdot\bw_i^{\T}\bw_j):\bw_i\in\bbU_i,\,\forall\,i\in[d_0-1],\,\bw_{d_0}\in\bbU_{d_0-1}
    \right\}.
    $$
    Due to the sparse support of each $\bbU_i$, it is easy to observe 
    the following tower-like structure 
    \begin{equation}\label{eq:tower-structure}
    \bw_i\in\bbU_i,\,\forall\,i\in[d_0-1],\,\bw_{d_0}\in\bbU_{d_0-1}\quad\implies\quad\bigvee_{j>i}(a_{i j}\cdot\bw_i^{\T}\bw_j)=\bigvee_{j>i}\left(a_{i j}\sum_{k=1}^{i}w_{k i}w_{k j}\right).
    \end{equation}
    Then, we claim that the following set inclusion holds
    \begin{equation}\label{eq:set-inclusion}
    \left\{\bigvee_{j>i}(a_{i j}\cdot\bw_i^{\T}\bw_j):\bw_i\in\bbU_i,\,\forall\,i\in[d_0-1],\,\bw_{d_0}\in\bbU_{d_0-1}\right\}\supseteq[-\rho,\rho]^{(d_0-1)d_0/2},
    \end{equation}
    which will complete the whole proof by definition, where 
    \begin{equation}\label{eq:rho-definition}
    \rho:=\frac{\min\{|a_{i j}|:j>i\}}{2^{d_0-1}}>0.
    \end{equation}
    We next prove the claim constructively. Let us consider
    $$
    \text{an arbitrary $\by\in[-\rho,\rho]^{(d_0-1)d_0/2}$, further partitioned as $\bigvee_{i=1}^{d_0-1}\by_i$, where $\by_i\in\R^{d_0-i}$, $\forall\,i\in[d_0-1]$},
    $$
    and we will show that the vectors $\bw^{\star}_1,\ldots,\bw^{\star}_{d_0}\in\R^{d-\operatorname{dim}(\bbV)}$ as recursively defined below\footnote{In the definition, all undefined elements default to zero.}
    \begin{equation}\label{eq:wstar-definition}
        w^{\star}_{i i}:=1/(\sqrt{2})^{d_0-i},\quad w^{\star}_{i j}:=\left(\be_{j-i}^{\T}\by_{i}-a_{i j}\sum_{k=1}^{i-1}w^{\star}_{k i}w^{\star}_{k j}\right)\cdot(a_{i j} w^{\star}_{i i})^{-1},~\forall\,j>i,
    \end{equation}
    satisfy the following two conditions:
    \begin{itemize}
        \item $\bigvee_{j>i}(a_{i j}\cdot{\bw^{\star}_i}^{\T}\bw^{\star}_j)=\by$,
        \item $\bw^{\star}_i\in\bbU_i,\,\forall\,i\in[d_0-1],\,\bw^{\star}_{d_0}\in\bbU_{d_0-1}$,
    \end{itemize}
    which will directly imply (\ref{eq:set-inclusion}), as desired. For the equality, it suffices to observe via (\ref{eq:tower-structure}) and (\ref{eq:wstar-definition}) that for any $i\in[d_0-1]$, the subvector of $\bigvee_{j>i}(a_{i j}\cdot{\bw^{\star}_i}^{\T}\bw^{\star}_j)$ corresponding to $\by_i$ is simply
    $$
        \bigvee_{j=i+1}^{d_0}\left(a_{i j}\sum_{k=1}^{i}w^{\star}_{k i}w^{\star}_{k j}\right)=\bigvee_{j=i+1}^{d_0}(a_{i j}w^{\star}_{i i}w^{\star}_{i j})+\bigvee_{j=i+1}^{d_0}\left(a_{i j}\sum_{k=1}^{i-1}w^{\star}_{k i}w^{\star}_{k j}\right)=\bigvee_{j=i+1}^{d_0}\be_{j-i}^{\T}\by_{i}=\by_i,
    $$
    as desired. To prove the inclusion, we perform an induction on the rows of $\bW^{\star}\in\R^{(d-\operatorname{dim}(\bbV))\times d_0}$. (Because the zero elements of $\bw^{\star}_1,\ldots,\bw^{\star}_{d_0}$ as defined in (\ref{eq:wstar-definition}) trivially satisfy the desired conditions demanded by $\bbU_i$'s in (\ref{eq:bbU-definition}),
    in what follows we shall exempt them from discussions.) Specifically, for the base case (i.e., the first row), we know from (\ref{eq:wstar-definition}) that $w^{\star}_{1 1}=1/(\sqrt{2})^{d_0-1}$, which trivially satisfies the desired condition demanded by $\bbU_1$, and $w^{\star}_{1 j}=(\be_{j-1}^{\T}\by_{1})\cdot(a_{1 j} w^{\star}_{1 1})^{-1}$ for all $j\in[2,d_0]\cap\N$. To finish the proof of this case, it remains to estimate
    $$
    |w^{\star}_{1 j}|=\left|(\be_{j-1}^{\T}\by_{1})\cdot(a_{1 j} w^{\star}_{1 1})^{-1}\right|\le\frac{\rho\cdot(\sqrt{2})^{d_0-1}}{|a_{1 j}|}\le\frac{|a_{1 j}|}{2^{d_0-1}}\cdot\frac{(\sqrt{2})^{d_0-1}}{|a_{1 j}|}=\frac{1}{(\sqrt{2})^{d_0-1}},~\forall\,j\in[2,d_0]\cap\N,
    $$
    as desired, where in the first inequality we have used the assumption that $\by\in[-\rho,\rho]^{(d_0-1)d_0/2}$, and the last the definition of $\rho$ in (\ref{eq:rho-definition}). Next, suppose that the first $\ell-1$ rows of $\bW^{\star}$ all satisfy the desired conditions, where $\ell\in[2,d_0-1]\cap\N$. (Recall that only the first $d_0-1$ coordinates of $\bbU_i$'s can possibly be nonzero as per (\ref{eq:bbU-definition}), therefore it suffices to perform the induction up to $\ell=d_0-1$.) Then, for the $\ell$-th row, we first notice similarly that $w^{\star}_{\ell \ell}=1/(\sqrt{2})^{d_0-\ell}$, which also trivially satisfies the desired condition. But before we proceed to handle the other elements of $\bW^{\star}$ in this row, as a preparation, we first estimate via a simple geometric series summation that 
    $$
    \left|\sum_{k=1}^{\ell-1}w^{\star}_{k \ell}w^{\star}_{k j}\right|\le\sum_{k=1}^{\ell-1}\frac{1}{2^{d_0-k}}= \frac{1}{2^{d_0-\ell}}-\frac{1}{2^{d_0-1}},\quad\forall\,j\in[\ell+1,d_0]\cap\N,
    $$
    where in the inequality we have used the inductive hypothesis. With this estimation at hand, we can then use the triangle inequality to deduce
    $$
    \begin{aligned}
        |w^{\star}_{\ell j}|&=\left|\left(\be_{j-\ell}^{\T}\by_{\ell}-a_{\ell j}\sum_{k=1}^{\ell-1}w^{\star}_{k \ell}w^{\star}_{k j}\right)\cdot(a_{\ell j} w^{\star}_{\ell \ell})^{-1}\right|
        \\&\le\frac{\rho\cdot(\sqrt{2})^{d_0-\ell}}{|a_{\ell j}|}+(\sqrt{2})^{d_0-\ell}\cdot\left|\sum_{k=1}^{\ell-1}w^{\star}_{k \ell}w^{\star}_{k j}\right|
        \\&\le(\sqrt{2})^{d_0-\ell}\cdot\left(\frac{1}{2^{d_0-1}}+\frac{1}{2^{d_0-\ell}}-\frac{1}{2^{d_0-1}}\right)=\frac{1}{(\sqrt{2})^{d_0-\ell}},\quad\forall\,j\in[\ell+1,d_0]\cap\N,
    \end{aligned}
    $$
    as desired, which completes the proof of the induction, and thus the whole proof.
\end{proof}

By combining Theorem~\ref{thm:fm-surj1} and Remark~\ref{rmk:special-case}, the following corollary is quite straightforward.

\begin{corollary}\label{cor:fm-chain-rule}
    Let $\ell:\R^{(d_0-1)d_0/2}\rightarrow\R$ be locally Lipschitz. Provided (\ref{eq:surj-fm-everywhere}) holds, the subdifferential chain rule of $\ell\circ\HI_{\operatorname{FM}}(\bullet;\ba)$, where $\HI_{\operatorname{FM}}(\bullet;\ba)$ was defined in (\ref{eq:mapping-FM}), holds everywhere.
\end{corollary}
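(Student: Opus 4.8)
The plan is to obtain the corollary as an essentially immediate consequence of the everywhere local surjection established in Theorem~\ref{thm:fm-surj1}, fed into the Clarke chain rule~\cite[Theorem~2.3.10]{clarke1990optimization} recalled in Section~\ref{sec:diff-theory}. Writing $f:=\ell\circ\HI_{\operatorname{FM}}(\bullet;\ba)$, that chain rule yields the sought equality $\partial_C f(\bP)=\bJ^\T_{\HI_{\operatorname{FM}}}(\bP)\,\partial_C\ell(\HI_{\operatorname{FM}}(\bP;\ba))$ at a point $\bP\in\R^{d\times d_0}$ as soon as three ingredients are in place: (i) $\HI_{\operatorname{FM}}(\bullet;\ba)$ is strictly differentiable at $\bP$; (ii) $\ell$ is locally Lipschitz near $\HI_{\operatorname{FM}}(\bP;\ba)$; and (iii) $\HI_{\operatorname{FM}}(\bullet;\ba)$ is locally surjective at $\bP$. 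So the whole task reduces to verifying these three items at an arbitrary $\bP$.

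First I would dispatch (i) and (ii). Each coordinate of $\HI_{\operatorname{FM}}(\bP;\ba)$ equals $a_{ij}\cdot\bp_i^\T\bp_j$, a quadratic polynomial in the entries of $\bP$; hence $\HI_{\operatorname{FM}}(\bullet;\ba)$ is $C^\infty$ and in particular strictly differentiable everywhere, settling (i), while (ii) is exactly the standing hypothesis on $\ell$. The remaining point (iii) is the only one needing an observation, and even that is routine: Theorem~\ref{thm:fm-surj1} guarantees local surjection at $\bP$ under the per-point condition $d\ge\operatorname{dim}(\operatorname{span}(\{\bp_i\}_{i=1}^{d_0}))+d_0-1$. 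Since $\{\bp_i\}_{i=1}^{d_0}$ consists of $d_0$ vectors, we always have $\operatorname{dim}(\operatorname{span}(\{\bp_i\}_{i=1}^{d_0}))\le d_0$, so the everywhere hypothesis (\ref{eq:surj-fm-everywhere}), namely $d\ge 2d_0-1$, forces the per-point condition to hold at every $\bP$. Thus $\HI_{\operatorname{FM}}(\bullet;\ba)$ is locally surjective everywhere, giving (iii), and invoking~\cite[Theorem~2.3.10]{clarke1990optimization} at each $\bP$ then yields the chain rule everywhere. Remark~\ref{rmk:special-case} enters to confirm that the same reasoning survives restricting the codomain to any subcollection $\mathbb{D}\subseteq\{(i,j):j>i\}$ of interaction terms (the present statement being the full case $\mathbb{D}=\{(i,j):j>i\}$), which is the form in which the result ultimately feeds into the FM training objective.

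I do not expect any genuine obstacle here: the substantive difficulty lives entirely in Theorem~\ref{thm:fm-surj1}, whose delicate ``peeling'' construction already secured the local surjection property, so the corollary is a packaging step. If anything deserves care, it is only the two bookkeeping checks above, namely that $\HI_{\operatorname{FM}}(\bullet;\ba)$ is honestly strictly differentiable (a consequence of it being polynomial, not merely multiaffine in the columns) and that the uniform bound $\operatorname{dim}(\operatorname{span}(\{\bp_i\}_{i=1}^{d_0}))\le d_0$ is what promotes the per-point hypothesis of Theorem~\ref{thm:fm-surj1} to an everywhere statement under (\ref{eq:surj-fm-everywhere}).
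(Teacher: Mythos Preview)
Your proposal is correct and follows essentially the same route as the paper, which simply says the corollary is ``quite straightforward'' from combining Theorem~\ref{thm:fm-surj1} with~\cite[Theorem~2.3.10]{clarke1990optimization} (and Remark~\ref{rmk:special-case}). The only redundancy is that you re-derive the ``everywhere'' local surjection from the per-point condition, whereas Theorem~\ref{thm:fm-surj1} already states this conclusion explicitly under~(\ref{eq:surj-fm-everywhere}); otherwise your verification of the three hypotheses for Clarke's chain rule is exactly what the paper leaves implicit.
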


We next apply Corollary~\ref{cor:fm-chain-rule} to the subdifferential computation in training a practical FM.

\begin{corollary}\label{cor:FM-subdiff-comp}
    Suppose that (\ref{eq:surj-fm-everywhere}) and the following qualification hold simultaneously:
    \begin{equation}\label{eq:dataset-qualification}
    \text{the dataset $\mathbb{T}:=\{(\bx_i,y_i)\}_{i=1}^n\subseteq\R^{d_0}\times\R$ satisfies $|\operatorname{supp}(\bx_i)\cap\operatorname{supp}(\bx_j)|\le 1$ for all $j>i$}.
    \end{equation}
    Then, it holds for the following function (recall from (\ref{eq:c-fm}) the definition of $f_{\operatorname{FM}}$)
    $$
        L(\bP;\mathbb{T}):=\sum_{k=1}^n\ell_k(f_{\textnormal{FM}}(\boldsymbol{x}_k;\bP)),\quad\text{from $\R^{d\times d_0}$ to $\R$},
    $$
    where $\ell_k:\R\rightarrow\R$ is locally Lipschitz for every $k\in[n]$, and any $\bP\in\R^{d\times d_0}$ that
    $$
    \partial_C L(\bP;\mathbb{T})=\sum_{k=1}^n\partial_C\ell_k(f_{\textnormal{FM}}(\boldsymbol{x}_k;\bP))\cdot\nabla_{\bP}f_{\textnormal{FM}}(\boldsymbol{x}_k;\bP).
    $$
\end{corollary}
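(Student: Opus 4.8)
The plan is to realize $L(\,\cdot\,;\mathbb{T})$ as a composition with $\HI_{\operatorname{FM}}(\bullet;\ba)$ for the all-one choice $\ba=\bd{1}$, and then peel off the separable outer layer using the dataset qualification. Concretely, for each $k\in[n]$ let $\bc_k\in\R^{(d_0-1)d_0/2}$ be the vector with $(\bc_k)_{ij}:=x_{k,i}x_{k,j}$, indexed compatibly with (\ref{eq:mapping-FM}), so that by (\ref{eq:c-fm}) we have $f_{\operatorname{FM}}(\bx_k;\bP)=\langle\bc_k,\HI_{\operatorname{FM}}(\bP;\bd{1})\rangle$ and hence $L(\bP;\mathbb{T})=\phi(\HI_{\operatorname{FM}}(\bP;\bd{1}))$ with $\phi(\bv):=\sum_{k=1}^n\ell_k(\langle\bc_k,\bv\rangle)$. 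Since each $\ell_k$ is locally Lipschitz and $\bv\mapsto\langle\bc_k,\bv\rangle$ is linear, $\phi$ is locally Lipschitz. The first key observation is that $\operatorname{supp}(\bc_k)=\{(i,j):i<j,\ i,j\in\operatorname{supp}(\bx_k)\}$, so the qualification (\ref{eq:dataset-qualification}) forces these index sets to be pairwise disjoint: a pair $(i,j)$ common to $\operatorname{supp}(\bc_k)$ and $\operatorname{supp}(\bc_{k'})$ would place the two distinct indices $i,j$ into $\operatorname{supp}(\bx_k)\cap\operatorname{supp}(\bx_{k'})$, contradicting $|\operatorname{supp}(\bx_k)\cap\operatorname{supp}(\bx_{k'})|\le 1$.

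With this reformulation in hand, I would first invoke Corollary~\ref{cor:fm-chain-rule}: as the weights $a_{ij}\equiv 1$ are nonzero, (\ref{eq:surj-fm-everywhere}) holds by hypothesis, and $\phi$ is locally Lipschitz, so the subdifferential chain rule applies and yields $\partial_C L(\bP;\mathbb{T})=\bJ^\T_{\HI_{\operatorname{FM}}}(\bP)\,\partial_C\phi(\bv)$ at $\bv:=\HI_{\operatorname{FM}}(\bP;\bd{1})$. It then remains to compute $\partial_C\phi(\bv)$ and to push the transpose Jacobian back through it.

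The heart of the argument, and the place where the qualification is indispensable, is the identity $\partial_C\phi(\bv)=\sum_{k=1}^n\partial_C\ell_k(\langle\bc_k,\bv\rangle)\,\bc_k$. Writing $\phi=\sum_k\psi_k$ with $\psi_k(\bv):=\ell_k(\langle\bc_k,\bv\rangle)$, the pairwise disjointness of the $\operatorname{supp}(\bc_k)$ means each $\psi_k$ depends only on its own private block of coordinates of $\bv$, so $\phi$ is separable across these blocks. This upgrades the generic Clarke sum-rule inclusion to an equality $\partial_C\phi(\bv)=\sum_k\partial_C\psi_k(\bv)$, since the Clarke subdifferential of a blockwise-separable function factorizes into the product of its blockwise subdifferentials (the defining convex hull of limiting gradients factorizes over independent coordinate blocks, and $\operatorname{conv}$ commutes with such products). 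For each single term, the inner map $\bv\mapsto\langle\bc_k,\bv\rangle$ is a nonzero linear functional, hence surjective and thus locally surjective, so by~\cite[Theorem~2.3.10]{clarke1990optimization} we obtain $\partial_C\psi_k(\bv)=\partial_C\ell_k(\langle\bc_k,\bv\rangle)\,\bc_k$; the degenerate case $\bc_k=\bd{0}$ (i.e.\ $|\operatorname{supp}(\bx_k)|\le 1$) makes $\psi_k$ constant, and both sides collapse to $\{\bd{0}\}$.

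Finally I would substitute back and use linearity of $\bJ^\T_{\HI_{\operatorname{FM}}}(\bP)$ over Minkowski sums, giving $\partial_C L(\bP;\mathbb{T})=\sum_k\partial_C\ell_k(\langle\bc_k,\bv\rangle)\cdot\bJ^\T_{\HI_{\operatorname{FM}}}(\bP)\bc_k$, and then recognize $\bJ^\T_{\HI_{\operatorname{FM}}}(\bP)\bc_k=\nabla_{\bP}\langle\bc_k,\HI_{\operatorname{FM}}(\bP;\bd{1})\rangle=\nabla_{\bP}f_{\operatorname{FM}}(\bx_k;\bP)$, the last being a bona fide gradient because $f_{\operatorname{FM}}(\bx_k;\cdot)$ is a quadratic polynomial in $\bP$. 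This produces exactly the claimed identity. I expect the main obstacle to be securing equality, rather than mere inclusion, in the computation of $\partial_C\phi$: the generic Clarke sum rule only furnishes one direction, and it is precisely the near-disjoint-support qualification (\ref{eq:dataset-qualification}) that restores the separable structure and hence the equality, so care is needed to make the separability-to-equality step rigorous and to handle the degenerate $\bc_k=\bd{0}$ terms.
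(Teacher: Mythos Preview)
Your proposal is correct and follows essentially the same route as the paper's proof: a composite decomposition through the FM mapping, the equality chain rule via local surjection (Corollary~\ref{cor:fm-chain-rule}), and the separable structure of the outer function induced by the qualification~(\ref{eq:dataset-qualification}), citing the same Clarke and Rockafellar tools. The only cosmetic difference is that the paper absorbs the data weights $x_{ik}x_{jk}$ into $\ba_{\mathbb{T}}$ and composes with the \emph{sub}mapping $\HI_{\operatorname{FM}}^{\mathbb{D}_{\mathbb{T}}}$ of Remark~\ref{rmk:special-case}, whereas you use the full mapping $\HI_{\operatorname{FM}}(\bullet;\bd{1})$ and push those weights into the coefficient vectors $\bc_k$; both choices yield the same disjoint-block separability and the same conclusion.
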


It is worth noting that the qualification (\ref{eq:dataset-qualification}) is always satisfied under an ordinary recommender system scenario; cf.~\cite[Section~4.1.1]{rendle2012factorization}.

\begin{proof}
    Because of (\ref{eq:dataset-qualification}), we can compositely rewrite $L(\bullet;\mathbb{T})=\ell(\bullet;\mathbb{T})\circ\HI^{\mathbb{D}_{\mathbb{T}}}_{\operatorname{FM}}(\bullet;\ba_{\mathbb{T}})$,
    where
    $$
       \ell(\by;\mathbb{T}):=\sum_{k=1}^n\ell_k\left(\sum_{(i,j)\in\mathbb{D}_{\mathbb{T},k}}y_{\mathcal{I}(i,j)}\right),\quad\text{from $\R^{|\mathbb{D}_{\mathbb{T}}|}$ to $\R$},
    $$
    the mapping $\mathcal{I}:\mathbb{D}_{\mathbb{T}}\rightarrow[|\mathbb{D}_{\mathbb{T}}|]$ is the lexicographical bijection between $\mathbb{D}_{\mathbb{T}}$ and $[|\mathbb{D}_{\mathbb{T}}|]$,
    $$
        \mathbb{D}_{\mathbb{T}}:=\bigcup_{k=1}^n\mathbb{D}_{\mathbb{T},k},~\ba_{\mathbb{T}}:=\bigvee_{j>i}\left(\bigcup_{k=1}^n\{x_{i k} x_{j k}:(i,j)\in\mathbb{D}_{\mathbb{T},k}\}\right),~\mathbb{D}_{\mathbb{T},k}:=\{(i,j):i,j\in\operatorname{supp}(\bx_k),\,j>i\},
    $$
    and $\HI^{\mathbb{D}_{\mathbb{T}}}_{\operatorname{FM}}(\bullet,\ba_{\mathbb{T}})$ is defined similarly as in Remark~\ref{rmk:special-case}. (In plain words, the construction of $\mathbb{D}_{\mathbb{T}}$ is such that all the feature interactions appeared in $f_{\textnormal{FM}}(\boldsymbol{x}_1;\bullet),\ldots,f_{\textnormal{FM}}(\boldsymbol{x}_n;\bullet)$ are exactly contained in it.) Because of (\ref{eq:dataset-qualification}) again, we know $\mathbb{D}_{\mathbb{T},k}$'s are mutually disjoint, and we can thus observe a separable structure in $\ell(\bullet;\mathbb{T})$.
    This, together with~\cite[Proposition~2.5]{rockafellar1985extensions},~\cite[Theorem~2.3.10]{clarke1990optimization}, Remark~\ref{rmk:special-case}, and Corollary~\ref{cor:fm-chain-rule}, directly implies the desired result.
\end{proof}

Similar to the previous part, with the sufficient condition (Theorem~\ref{thm:fm-surj1}) established, it is very natural and interesting to ask if it is also necessary (at least at the origin). With the ideas used in Example~\ref{ex:mf-general}, this time it becomes not that hard to construct the following negative example.

\begin{example}\label{ex:fm-general}
    Consider the case where $d_0\ge 3$ is arbitrary and $d=d_0-2$.
    We manage to refute the local surjection property of $\HI_{\operatorname{FM}}(\bullet;\ba)$ (\ref{eq:mapping-FM}) at the origin by showing
    $$
    \HI_{\operatorname{FM}}(\R^{(d_0-2)\times d_0};\ba)\cap\bbO(\ba)=\varnothing,
    $$
    where
    $$
        \bbO(\ba):=\operatorname{relint}\left(\R_+^{d_0-3}\times\left(\prod_{j=d_0-1}^{d_0}(\operatorname{sign}(a_{1 j})\cdot\R_+)\right)\times\{0\}^{(d_0-2)(d_0-1)/2}
        \right).
    $$
    \begin{wrapstuff}[r,width=.29\textwidth,abovesep=-5pt]
    \begin{equation*}
\begin{pNiceArray}{ccccc}
\multicolumn{1}{c|}{\phantom{\R_+}} & {\R_+} & {\R_+} & \cdots & {\R_+} \\ \cline{2-2}
\phantom{\R_+} & \multicolumn{1}{c|}{\phantom{\R_+}} & {\{0\}} & \cdots & {\{0\}} \\ \cline{3-3}
\phantom{\R_+} & \phantom{\R_+} & \multicolumn{1}{c|}{\phantom{\R_+}} & \ddots & \vdots \\ \cline{4-4}
\phantom{\R_+} & \phantom{\R_+} & \phantom{\R_+} & \multicolumn{1}{c|}{\phantom{\R_+}} & {\{0\}} \\ \cline{5-5}
\phantom{\R_+} & \phantom{\R_+} & \phantom{\R_+} & \phantom{\R_+} & \phantom{\R_+}
\end{pNiceArray}
\end{equation*}
\end{wrapstuff}
\noindent (A visualization of the folded $\bbO(\ba)$ with $a_{1 j}>0$ for $j=d_0-1,d_0$ before taking relative interior is given in the right panel, where the ordinate and abscissa refer to $i$ and $j$ respectively.) To this end, suppose that there is some $\bP^{\star}\in\R^{(d_0-2)\times d_0}$ with $\bigvee_{j>i}(a_{i j}\cdot{\bp_i^{\star}}^{\T}\bp_j^{\star})\in\bbO(\ba)$. From ${\bp_1^{\star}}^{\T}\bp_i^{\star}\neq 0$ for all $i\in[2,d_0-2]\cap\N$, we know $\bp_i^{\star}\neq\bd{0}$ for each $i\in[d_0-2]$. Besides, from ${\bp_i^{\star}}^{\T}\bp_j^{\star}=0$ for all $i,j\in[2,d_0-2]\cap\N$ with $i\neq j$, we know $\{\bp_i^{\star}\}_{i=2}^{d_0-2}$ forms an orthogonal basis for $\operatorname{span}(\{\bp_i^{\star}\}_{i=2}^{d_0-2})$, which is therefore a $(d_0-3)$-dimensional subspace of $\R^{d_0-2}$. As ${\bp_i^{\star}}^{\T}\bp_{d_0-1}^{\star},{\bp_i^{\star}}^{\T}\bp_{d_0}^{\star}=0$ for all $i\in[2,d_0-2]\cap\N$, it follows that $\bp_{d_0-1}^{\star},\bp_{d_0}^{\star}\in(\operatorname{span}(\{\bp_i^{\star}\}_{i=2}^{d_0-2}))^\perp$, a one-dimensional subspace. Therefore, as ${\bp_1^{\star}}^{\T}\bp_{d_0-1}^{\star},{\bp_1^{\star}}^{\T}\bp_{d_0}^{\star}>0$, we know $\bp_{d_0-1}^{\star}$ and $\bp_{d_0}^{\star}$ are also nonzero with the same direction,
which further implies $\langle\bp_{d_0-1}^{\star},\bp_{d_0}^{\star}\rangle>0$, a contradiction.
\end{example}

Summarizing Theorem~\ref{thm:fm-surj1} and Example~\ref{ex:fm-general} leads to the following interesting corollary.

\begin{corollary}\label{cor:fm-iff}
    The map $\HI_{\operatorname{FM}}(\bullet;\ba)$ 
    (\ref{eq:mapping-FM})
    is locally surjective at the origin if and only if $d\ge d_0-1$.
\end{corollary}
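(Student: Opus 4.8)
The plan is to prove the two implications separately, using Theorem~\ref{thm:fm-surj1} for sufficiency and Example~\ref{ex:fm-general} (promoted by a canonical embedding) for necessity. For the ``if'' direction, I would simply specialize Theorem~\ref{thm:fm-surj1} to the interested point $\bP=\bO$. At the origin every column $\bp_i$ is the zero vector, so $\operatorname{span}(\{\bp_i\}_{i=1}^{d_0})=\{\bd{0}\}$ has dimension $0$, and the hypothesis $d\ge\operatorname{dim}(\operatorname{span}(\{\bp_i\}_{i=1}^{d_0}))+d_0-1$ of that theorem collapses to exactly $d\ge d_0-1$. Theorem~\ref{thm:fm-surj1} then immediately delivers the local surjection property at the origin, giving one direction with no further work.

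For the ``only if'' direction I would argue the contrapositive and assume $d\le d_0-2$. Since $d\ge 1$ (as $d\in\N$), this forces $d_0\ge 3$, which is precisely the regime of Example~\ref{ex:fm-general}, where a nonempty set $\bbO(\ba)$ is shown to be disjoint from $\HI_{\operatorname{FM}}(\R^{(d_0-2)\times d_0};\ba)$. To cover all $d\le d_0-2$ rather than only $d=d_0-2$, I would invoke the canonical embedding $\R^{d\times d_0}\hookrightarrow\R^{(d_0-2)\times d_0}$ that appends $d_0-2-d$ zero rows: this leaves every inner product $\bp_i^{\T}\bp_j$ unchanged, hence $\HI_{\operatorname{FM}}(\R^{d\times d_0};\ba)\subseteq\HI_{\operatorname{FM}}(\R^{(d_0-2)\times d_0};\ba)$, so the smaller image also misses $\bbO(\ba)$. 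Finally, noting that $\HI_{\operatorname{FM}}(\bO;\ba)=\bd{0}$ and that $\bbO(\ba)$ is invariant under positive scaling (its coordinates are constrained to open half-lines or to $\{0\}$), $\bbO(\ba)$ meets every ball $r\bbB$ centered at the origin; consequently no $r\bbB$ can be contained in $\HI_{\operatorname{FM}}(t\bbB;\ba)\subseteq\HI_{\operatorname{FM}}(\R^{d\times d_0};\ba)$, which refutes local surjectivity at the origin.

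I would close by dispatching the degenerate cases $d_0\in\{1,2\}$: here the codomain $\R^{(d_0-1)d_0/2}$ has dimension $0$ or $1$, local surjectivity at the origin holds trivially (or again via Theorem~\ref{thm:fm-surj1}), and the condition $d\ge d_0-1$ is automatic because $d\ge 1$, so the claimed equivalence is consistent. The one genuinely delicate point — and the main obstacle — is the necessity direction: one must recognize that Example~\ref{ex:fm-general} is stated only for the single latent dimension $d=d_0-2$, that the monotone zero-padding embedding is what extends it to all $d\le d_0-2$, and that disjointness from the lower-dimensional yet scale-invariant cone $\bbO(\ba)$ (as opposed to a full-dimensional region) already suffices to preclude the inclusion of any origin-centered ball, which is exactly what the definition of the local surjection property requires.
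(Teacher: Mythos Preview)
Your proposal is correct and follows precisely the route the paper intends: the paper itself offers no separate proof but simply states that ``Summarizing Theorem~\ref{thm:fm-surj1} and Example~\ref{ex:fm-general} leads to'' the corollary, and your two directions do exactly this. The extra details you supply---the zero-padding embedding to handle $d<d_0-2$, the observation that the scale-invariant cone $\bbO(\ba)$ meets every origin-centered ball so that disjointness already obstructs the inclusion $r\bbB\subseteq\HI_{\operatorname{FM}}(t\bbB;\ba)$, and the degenerate cases $d_0\in\{1,2\}$---are all left implicit in the paper but are handled correctly in your write-up.
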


\subsection{Tensor generalizations}\label{sec:lsp-cp}
In this subsection, we manage to extend the results in Section~\ref{sec:lsp-mf} to higher-order cases, i.e., study the local surjection property of the following mapping
\begin{equation}\label{eq:mapping-CP}
\HI_{\operatorname{CP}}(\bX,\bY,\bZ):=\llbracket\bX,\bY,\bZ\rrbracket,\quad\text{from $\R^{d\times n_1}\times\R^{d\times n_2}\times\R^{d\times n_3}$ to $\R^{n_1\times n_2\times n_3}$},
\end{equation}
corresponding to the tensor CP factorization example. As we shall soon see in the next part, despite its classicality, the above mapping (as well as its close relatives) actually has a close connection to a variety of modern neural networks. Unfortunately, regarding this similar but generalized example, we only have some very partial results. The first one parallels that of Proposition~\ref{prop:surj-mf}.

\begin{proposition}\label{prop:surj-CP-origin}
    Provided $d\ge\min\{\prod_{j\ne i}n_j\}_{i=1}^3$, the map $\HI_{\operatorname{CP}}$ 
    is locally surjective at the origin.
\end{proposition}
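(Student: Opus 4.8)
The plan is to mimic the proof of Proposition~\ref{prop:surj-mf}, freezing two of the three factors so that the remaining map is \emph{linear} in the third, and then controlling each mode-1 slice of the output independently. By the symmetry of $\HI_{\operatorname{CP}}$ in its three arguments I may assume without loss of generality that $n_2 n_3=\min\{\prod_{j\ne i}n_j\}_{i=1}^3$ (equivalently, that $n_1=\max\{n_1,n_2,n_3\}$), so that the hypothesis reads $d\ge n_2 n_3$; I will freeze $\bY$ and $\bZ$ and let $\bX$ vary.

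The key observation is that once $\bY,\bZ$ are fixed, the $i$-th mode-1 slice of $\llbracket\bX,\bY,\bZ\rrbracket$ equals $\sum_{l=1}^d x_{li}\,\by^l(\bz^l)^{\T}$, where $\by^l,\bz^l$ denote the $l$-th rows of $\bY$ and $\bZ$. This is a linear function of the single column $\bx_i\in\R^d$, and distinct columns feed distinct slices, so it suffices to choose $\bY,\bZ$ so that the rank-one matrices $\{\by^l(\bz^l)^{\T}\}_{l=1}^d$ span $\R^{n_2\times n_3}$. Since this space has dimension $n_2 n_3\le d$, I can realize this with the standard basis: fixing a bijection $l\mapsto(p(l),q(l))$ between $[n_2 n_3]$ and $[n_2]\times[n_3]$, I set the first $n_2 n_3$ rows of $\bY$ and $\bZ$ to suitably scaled basis vectors $\be^{n_2}_{p(l)}$ and $\be^{n_3}_{q(l)}$ and the remaining $d-n_2 n_3$ rows to zero, so that the products $\by^l(\bz^l)^{\T}$ run over scaled copies of all the matrix units $\be_p(\be_q)^{\T}$.

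With this choice the $(p,q)$ entry of the $i$-th slice becomes a scaled copy of the coordinate $x_{l(p,q),i}$, i.e.\ the map from the free coordinates of $\bx_i$ to the slice entries is a scaled permutation and hence carries balls to balls. Thus, for any prescribed budget $t>0$, allocating the Frobenius norm across the three factors (as in Proposition~\ref{prop:surj-mf}) and letting the free coordinates of each $\bx_i$ range over a small ball lets the $i$-th slice range over a full ball $\rho\bbB$ in $\R^{n_2\times n_3}$ for some $\rho>0$. Assembling the $n_1$ slices and invoking the elementary inclusion (\ref{eq:easy-fact}) then yields $\HI_{\operatorname{CP}}(t\bbB)\supseteq\rho\bbB$ in $\R^{n_1\times n_2\times n_3}$, which is precisely the local surjection property at the origin since $\HI_{\operatorname{CP}}(\bO)=\bd{0}$.

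I expect no serious obstacle here. The content that is genuinely new relative to the matrix case is the realization that the correct linearization requires $d$ rank-one matrices to \emph{span} the slice space $\R^{n_2\times n_3}$, which is exactly why the threshold is the \emph{product} $n_2 n_3$ rather than a single dimension; once this is in place the remaining norm bookkeeping is routine and identical in spirit to Proposition~\ref{prop:surj-mf}. The only mild care needed is to check that the free-coordinate-to-slice map is a scaled permutation (so balls go to balls) and that the total norm of $(\bX,\bY,\bZ)$ stays within $t$ after splitting the budget among the three factors.
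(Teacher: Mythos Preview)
Your proposal is correct, and the argument is complete: the formula $\sum_{l} x_{li}\,\by^l(\bz^l)^{\T}$ for the $i$-th mode-$1$ slice is accurate, your choice of rows realizes the matrix units, and the resulting coordinate-to-entry map is indeed a scaled permutation, so the norm bookkeeping and the final assembly via (\ref{eq:easy-fact}) go through exactly as you describe.

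However, your route differs from the paper's. You freeze \emph{two} factors ($\bY,\bZ$) so that the remaining map is \emph{linear} in $\bX$, and your particular choice of rows makes each slice a scaled coordinate permutation of a column of $\bX$; balls then map onto balls without further argument. The paper instead freezes only \emph{one} factor, choosing its columns so that the tensor decouples into $n_1$ independent \emph{bilinear} maps $(\bB_i,\bC_i)\mapsto\bB_i^{\T}\bC_i/\sqrt{n_3}$ on disjoint blocks of the remaining two factors, and then invokes Proposition~\ref{prop:surj-mf} (the matrix-factorization case) on each such slice. Your approach is more self-contained and elementary---it bypasses the appeal to Proposition~\ref{prop:surj-mf} entirely---while the paper's approach is more modular, exhibiting the CP case as a direct reduction to the already-established MF case. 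Both arrive at the same threshold $d\ge\min_i\prod_{j\ne i}n_j$.
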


\begin{proof}
    Suppose w.l.o.g.\ that $\min\{\prod_{j\ne i}n_j\}_{i=1}^3=n_1 n_3$. We first observe similarly that for any $t>0$, there exists some $\varepsilon>0$, such that $\HI_{\operatorname{CP}}((\bX,\bY,\bZ)+t\bbB)$ contains
    $$
    \llbracket\bX,\bY,\bZ\rrbracket+\varepsilon^3\cdot\left\{\llbracket\bA,\bB,\bC\rrbracket:\ba_i\in\bbB,\,\forall\,i\in[n_1],\,\bb_j\in\bbB,\,\forall\,j\in[n_2],\,\bc_k\in\bbB,\,\forall\,k\in[n_3]
    \right\}.
    $$
    Then, by letting
    $$
    \ba_i:=(\be_i^{n_1}\boxtimes\bd{1}_{n_3}/\sqrt{n_3})\vee\bd{0}_{d-n_1 n_3}\in\bbB,\quad\text{for each $i\in[n_1]$},
    $$
    which is legitimate as $d\ge n_1 n_3$, and partitioning $\bb_j=\bigvee_{i=1}^{n_1+1}\bb_{j,i}$ and $\bc_k=\bigvee_{i=1}^{n_1+1}\bc_{k,i}$ accordingly, where $\bb_{j,i},\bc_{k,i}\in\R^{n_3}$ for all $i\in[n_1]$, we see $\HI_{\operatorname{CP}}((\bX,\bY,\bZ)+t\bbB)$ further contains
    $$
    \llbracket\bX,\bY,\bZ\rrbracket+\varepsilon^3\cdot\left\{\G(\bB,\bC):\bb_{j,i}\in\bbB/\sqrt{n_1},\,\forall\,(i,j)\in[n_1]\times [n_2],\,\bc_{k,i}\in\bbB/\sqrt{n_1},\,\forall\,(i,k)\in[n_1]\times[n_3]\right\},
    $$
    where $\G:\R^{d\times n_2}\times\R^{d\times n_3}\rightarrow\R^{n_1\times n_2\times n_3}$ with
    $$
        (\G(\bB,\bC))_{i j k}:=\bb_{j,i}^{\T}\bc_{k,i}/\sqrt{n_3},\quad\text{for all $(i,j,k)\in[n_1]\times[n_2]\times[n_3]$}.
    $$
    Because the mode-$1$ slices of $\G(\bB,\bC)$ are independent of each other and the lengths of $\bb_{j,i}$ and $\bc_{k,i}$ satisfy $n_3\ge\min\{n_2,n_3\}$, we can simply apply
    Proposition~\ref{prop:surj-mf} to each model-$1$ slice of $\G(\bB,\bC)$, and then invoke (\ref{eq:easy-fact}) to conclude the proof.
\end{proof}

The second result considers ``pseudo tensors'' that are essentially matrices.

\begin{proposition}\label{prop:pseudo-tensors}
    If $d\ge n_2+n_3+\min\{n_2, n_3\}$,
    then the below map is locally surjective everywhere 
    $$
    \HI^{\dagger}_{\operatorname{CP}}(\bx,\bY,\bZ):=\operatorname{squeeze}(\llbracket\bx,\bY,\bZ\rrbracket)=\bY^{\T}\operatorname{diag}(\bx)\bZ,\quad\text{from $\R^{d}\times\R^{d\times n_2}\times\R^{d\times n_3}$ to $\R^{n_2\times n_3}$}.
    $$
\end{proposition}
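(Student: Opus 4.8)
The plan is to reduce the problem to the matrix factorization case (Theorem~\ref{thm:surj-mf-anypoint}) by exploiting the invertibility of $\operatorname{diag}(\bx)$. Write $\bC_0:=\bY^{\T}\operatorname{diag}(\bx)\bZ=\HI^{\dagger}_{\operatorname{CP}}(\bx,\bY,\bZ)$ for the base value at the interested point $(\bx,\bY,\bZ)$. The key observation is that when $\bx$ is entrywise nonzero, the linear map $\bZ\mapsto\operatorname{diag}(\bx)\bZ$ is a bijection of $\R^{d\times n_3}$, so that $\HI^{\dagger}_{\operatorname{CP}}(\bx,\bullet,\bullet)$ coincides with $\HI_{\operatorname{MF}}$ (\ref{eq:mapping-MF}) up to this bijection. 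Since $\operatorname{rank}([\bY\mid\operatorname{diag}(\bx)\bZ])\le n_2+n_3$, the hypothesis $d\ge n_2+n_3+\min\{n_2,n_3\}$ is exactly what Theorem~\ref{thm:surj-mf-anypoint} asks of $\HI_{\operatorname{MF}}$ at $(\bY,\operatorname{diag}(\bx)\bZ)$. Hence, when $\bx$ is entrywise nonzero, perturbing only $(\bY,\bZ)$ (and pulling the resulting ball back through the bi-Lipschitz bijection) already produces a ball around $\bC_0$, without even moving $\bx$.

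It therefore remains to treat points at which $\bx$ has some vanishing coordinate. Here I would perturb $\bx$ to a nearby entrywise-nonzero $\bx+\varepsilon\ba$, run the reduction above to obtain, for each budget $t$, a full ball in $\R^{n_2\times n_3}$ of some radius $\rho>0$ centered at $\bY^{\T}\operatorname{diag}(\bx+\varepsilon\ba)\bZ=\bC_0+\varepsilon\bY^{\T}\operatorname{diag}(\ba)\bZ$, and then let $\ba$ vary. Varying $\ba$ slides this center across the subspace $\{\bY^{\T}\operatorname{diag}(\ba)\bZ:\ba\in\R^{d}\}$, while the radius $\rho$ takes care of the complementary directions; one then wants the union of these sliding balls to contain a genuine neighborhood of $\bC_0$.

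The hard part will be the centering of this union, i.e.\ controlling the ``offset'' $\varepsilon\bY^{\T}\operatorname{diag}(\ba)\bZ$ created by moving $\bx$. On the coordinates where $\bx$ vanishes, invertibility forces the corresponding entries of $\varepsilon\ba$ to stay bounded away from zero, so the offset cannot be made negligible relative to $\rho$ merely by shrinking $\varepsilon$: the offset is linear in the perturbation of $\bx$, whereas the radius $\rho$ inherited from Theorem~\ref{thm:surj-mf-anypoint} is only quadratic in the budget. The way I would defeat this is to exploit that $\HI^{\dagger}_{\operatorname{CP}}$ is \emph{linear}, hence odd, in its $\bx$-argument restricted to the vanishing coordinates: flipping the sign of $\ba$ there negates the associated contribution, so the reachable set is symmetric about $\bC_0$ in exactly those directions. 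Routing the ``origin-type'' coverage through these coordinates---which is where the $\min\{n_2,n_3\}$ term in the bound is spent, in parallel with how Theorem~\ref{thm:surj-mf-anypoint} spends it on $\HI_{\operatorname{MF}}$ at the origin---then lets the symmetric sliding fill a full ball around $\bC_0$. The one quantitative point demanding care is making the radius bookkeeping uniform, i.e.\ securing a lower bound on $\rho$ that holds as the center slides and as $\ba$ ranges over the admissible (entrywise-nonzero) perturbations.
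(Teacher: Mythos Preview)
Your overall strategy matches the paper's: absorb $\operatorname{diag}(\bx)$ into the matrix factors and reduce to Theorem~\ref{thm:surj-mf-anypoint}. You push $\operatorname{diag}(\bx)$ one-sidedly into $\bZ$; the paper instead factors $\bx=(\operatorname{sign}(\bx)\oast\sqrt{|\bx|})\oast\sqrt{|\bx|}$ and absorbs one square-root into each of $\bY$ and $\bZ$. This is cosmetic---both routes land on the same $\HI_{\operatorname{MF}}$ instance up to a bi-Lipschitz reparametrization, and both use only the bound $\operatorname{rank}([\,\bY\mid\operatorname{diag}(\bx)\bZ\,])\le n_2+n_3$ to meet the hypothesis of Theorem~\ref{thm:surj-mf-anypoint}.

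Where you part ways is on the degenerate case $\operatorname{supp}(\bx)\ne[d]$. The paper dismisses it in one parenthetical (``slightly perturb $\bx$\ldots feasible since we are always working within some neighborhoods'') and proceeds as if $\bx$ were entrywise nonzero. You instead flag it as the crux, and your worry is legitimate: after moving the zero entries of $\bx$ to level $\delta$, both the center offset $\|\bC_0'-\bC_0\|$ and the radius inherited from Theorem~\ref{thm:surj-mf-anypoint} scale like $\delta$ (the latter through $\kappa^2=\min_i|x_i'|=\delta$), so shrinking $\delta$ does not close the gap. However, your proposed remedy does not close it either. The odd-ness in $\ba$ on the zero coordinates merely reflects the sliding centers about $\bC_0$; a union of balls $\bC_0\pm\delta\bM+\rho\,\bbB$ with $\rho<\|\delta\bM\|$ still misses a neighborhood of $\bC_0$. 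And ``routing the origin-type coverage through these coordinates'' is not obviously available: nothing guarantees $|\operatorname{supp}(\bx)^c|\ge\min\{n_2,n_3\}$, so you cannot in general spend the $\min\{n_2,n_3\}$ budget there. The uniform-in-$\ba$ lower bound on $\rho$ that you yourself single out as ``demanding care'' is precisely the missing ingredient, and the sketch does not supply it. In short: on the main reduction you match the paper; on the degenerate case you are more honest about the difficulty than the paper's one-line hand-wave, but your outline is not yet a proof.
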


\begin{proof}
    To begin with, we suppose w.l.o.g.\ that $x_i\neq 0$ for each $i\in[d]$. (For the remaining case, it suffices to slightly perturb $\bx$ along the direction of $\sum_{i\notin\operatorname{supp}(\bx)}\be_i$ to turn it into ``general position'';
    this is feasible since we are always working within some neighborhoods.) Then, we observe as before that for any $t>0$, there exists some $\varepsilon>0$, such that $\HI^{\dagger}_{\operatorname{CP}}((\bx,\bY,\bZ)+t\bbB)-\bY^{\T}\operatorname{diag}(\bx)\bZ$ contains 
    $$
    \left\{\varepsilon\cdot\bB^{\T}\operatorname{diag}(\bx)\bZ+\varepsilon\cdot\bY^{\T}\operatorname{diag}(\bx)\bC+\varepsilon^2\cdot\bB^{\T}\operatorname{diag}(\bx)\bC:\bb_j\in\bbB,\,\forall\,j\in[n_2],\,\bc_k\in\bbB,\,\forall\,k\in[n_3]\right\}.
    $$ 
    Next, let us factorize $\bx=(\operatorname{sign}(\bx)\oast\sqrt{|\bx|})\oast\sqrt{|\bx|}$, and define accordingly
    $$
    \bb_j^\prime:=\sqrt{|\bx|}\oast\bb_j,~\bc_k^\prime:=\left(\operatorname{sign}(\bx)\oast\sqrt{|\bx|}\right)\oast\bc_k,~\by_j^\prime:=\sqrt{|\bx|}\oast\by_j,~\bz_k^\prime:=\left(\operatorname{sign}(\bx)\oast\sqrt{|\bx|}\right)\oast\bz_k.
    $$
    Before we proceed, we first observe
    $$
    \sqrt{|\bx|}\oast\bbB,\left(\operatorname{sign}(\bx)\oast\sqrt{|\bx|}\right)\oast\bbB\supseteq\left(\min_{i\in[d]}\sqrt{|x_i|}\right)\cdot\bbB\neq\{\bd{0}\},
    $$
    where the second inequality follows from the the assumption made at the very beginning. As a result, we know $\HI^{\dagger}_{\operatorname{CP}}((\bx,\bY,\bZ)+t\bbB)-\bY^{\T}\operatorname{diag}(\bx)\bZ$ further contains 
    $$
    \left\{\varepsilon\cdot{\bB^{\prime}}^{\T}\bZ^{\prime}+\varepsilon\cdot{\bY^{\prime}}^{\T}\bC^{\prime}+\varepsilon^2\cdot{\bB^{\prime}}^{\T}\bC^{\prime}:\bb_j^\prime\in\kappa\cdot\bbB,\,\forall\,j\in[n_2],\,\bc_k^\prime\in\kappa\cdot\bbB,\,\forall\,k\in[n_3]\right\},
    $$
    where $\kappa:=\min_{i\in[d]}\sqrt{|x_i|}>0$. A closer inspection reveals that the situation here is almost identical to that of Theorem~\ref{thm:surj-mf-anypoint}, and we can thus repeat the arguments there to conclude the proof.
\end{proof}

After presenting the proofs, we would like to remark that, as the readers may have already sensed, the biggest difficulty in studying the local surjection property 
of $\HI_{\operatorname{CP}}$ in its full generality stems from the coordinate dependence of $\langle\cdot,\cdot,\cdot\rangle$.
Unfortunately, we have no idea about how to tackle this difficulty,
but we still believe the existence of some dimension threshold beyond which $\HI_{\operatorname{CP}}$ is locally surjective everywhere, as in the matrix case.
As an aside, it is worth noting that similar difficulties also appear in some other generalizations, such as the higher-order FM~\cite{blondel2016higher} and neural FM~\cite{he2017neuralfm}, for which the interested mappings are respectively
$$
\HI_{\operatorname{HOFM}}(\bP;\ba):=\bigvee_{k>j>i}(a_{i j k}\cdot\langle\bp_i,\bp_j,\bp_k\rangle),\quad\text{from $\R^{d\times d_0}$ to $\R^{(d_0-2)(d_0-1)d_0/6}$},
$$
where $a_{i j k}\neq 0$ for all $k>j>i$, and
\begin{equation}\label{eq:neuralFM}
\HI_{\operatorname{NeuFM}}(\bP,\bH;\ba):=\bigvee_{s=1}^h\left(\bigvee_{j>i}(a_{i j}\cdot\langle\bh_s,\bp_i,\bp_j\rangle)\right),\quad\text{from $\R^{d\times d_0}\times \R^{d\times h}$ to $\R^{h(d_0-1)d_0/2}$},
\end{equation}
where $a_{i j}\neq 0$ for all $j>i$ and $h\in\N$.


\subsection{Neural extensions}\label{sec:neural-extensions}
In this part, we manage to apply our previous establishments to some modern neural networks. 
\subsubsection{Preparations}
To begin with, we recall the following fundamental fact on the subdifferential equivalence between the objective function for training a two-layer neural network and its partial linearization. This is a direct consequence of~\cite[Theorem~6.5]{mordukhovich1996nonsmooth} and~\cite[Theorem~8.49]{rockafellar2009variational} (see also~\cite[Theorem~46]{tian2023testing}).

\begin{fact}\label{fact:equiv}
    Let $n,h,d\in\N$. 
    Suppose for all $i\in[n]$ and $k\in[h]$ that:
    \begin{itemize}
        \item $\ell_{i}:\R\rightarrow\R$ and $g_{i k}:\R^d\rightarrow\R$ are strictly differentiable,
        \item $\sigma_{i k}:\R\rightarrow\R$ is locally Lipschitz.
    \end{itemize}
    Then, at any interested point $(\bv^\prime,\bx^\prime)\in\R^h\times\R^d$, the following two functions
    $$
    L(\bv,\bx):=\sum_{i=1}^n\ell_{i}\left(\sum_{k=1}^{h} v_{k}\cdot\sigma_{i k}(g_{i k}(\bx))\right),\quad\overline{L}(\bv,\bx):=\sum_{k=1}^{h} v_{k}\cdot\sum_{i=1}^n p_{i}\cdot\sigma_{i k}(g_{i k}(\bx)),
    $$
    where $p_{i}:=\ell_{i}^\prime\pig(\sum_{k=1}^{h} v_{k}^\prime\cdot\sigma_{i k}(g_{i k}(\bx^{\prime}))\pig)$, share the same Clarke subdifferential.
\end{fact}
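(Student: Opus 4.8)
The plan is to recognize $L$ as the composition of a \emph{smooth} outer map with a locally Lipschitz inner map, and then to invoke the Clarke chain rule for such compositions. This is the ``easy'' direction of the chain rule: it holds as an equality with no constraint qualification precisely because all of the nonsmoothness is confined to the inner map, and it automatically produces the partial linearization $\overline{L}$. In particular, none of the surjectivity machinery developed earlier in the paper (Theorem~\ref{thm:surj-mf-anypoint}, Theorem~\ref{thm:fm-surj1}) is needed here.

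First I would introduce, for each $i\in[n]$, the scalar function
$$
z_i(\bv,\bx):=\sum_{k=1}^h v_k\cdot\sigma_{i k}(g_{i k}(\bx)),
$$
assemble the inner map $F:\R^h\times\R^d\rightarrow\R^n$ with $F(\bv,\bx):=(z_1(\bv,\bx),\ldots,z_n(\bv,\bx))^\T$, and the outer map $\Theta:\R^n\rightarrow\R$ with $\Theta(\bu):=\sum_{i=1}^n\ell_i(u_i)$, so that $L=\Theta\circ F$. I would then check the hypotheses of the cited chain rule. The map $F$ is locally Lipschitz: each $\sigma_{i k}\circ g_{i k}$ is locally Lipschitz (a locally Lipschitz $\sigma_{i k}$ composed with the strictly differentiable, hence locally Lipschitz, $g_{i k}$), and each $z_i$ is a finite sum of products of these with the smooth coordinate functions $v_k$, a class closed under sums and products. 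Likewise, since each $\ell_i$ is strictly differentiable and (separable) summation preserves strict differentiability, $\Theta$ is strictly differentiable with $\nabla\Theta(\bu)=(\ell_1'(u_1),\ldots,\ell_n'(u_n))^\T$.

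Next I would apply the Clarke chain rule for a composition whose outer map is strictly differentiable, i.e.\ the combination of~\cite[Theorem~6.5]{mordukhovich1996nonsmooth} and~\cite[Theorem~8.49]{rockafellar2009variational}. Writing $\bar{\bu}:=F(\bv',\bx')$, this yields at the base point
$$
\partial_C L(\bv',\bx')=\partial_C\big(\langle\nabla\Theta(\bar{\bu}),F(\cdot)\rangle\big)(\bv',\bx').
$$
The genuine content here—and where the real technical depth of the cited theorems sits—is that because $\Theta$ is $C^1$, its gradient may be frozen at the base value $\bar{\bu}$ and pulled through the nonsmooth inner map without altering the Clarke subdifferential; at the level of the underlying generalized directional derivative this is a mean-value-plus-continuity argument showing $(\Theta\circ F)^{\circ}(\cdot;\cdot)$ and $\langle\nabla\Theta(\bar{\bu}),F(\cdot)\rangle^{\circ}(\cdot;\cdot)$ coincide at $(\bv',\bx')$, after which equality of the subdifferentials follows by taking support functions.

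Finally I would identify the frozen-gradient linearization with $\overline{L}$. Since the $i$-th component of $\nabla\Theta(\bar{\bu})$ is $\ell_i'(z_i(\bv',\bx'))=\ell_i'\big(\sum_{k=1}^h v_k'\cdot\sigma_{i k}(g_{i k}(\bx'))\big)=p_i$, interchanging the two finite sums gives
$$
\langle\nabla\Theta(\bar{\bu}),F(\bv,\bx)\rangle=\sum_{i=1}^n p_i\, z_i(\bv,\bx)=\sum_{k=1}^h v_k\sum_{i=1}^n p_i\cdot\sigma_{i k}(g_{i k}(\bx))=\overline{L}(\bv,\bx),
$$
whence $\partial_C L(\bv',\bx')=\partial_C\overline{L}(\bv',\bx')$, as claimed. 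I expect no hard estimate in this argument; the only care needed is in verifying the hypotheses of the cited theorem (local Lipschitzness of $F$ and strict differentiability of $\Theta$) and in matching indices in the double sum. The conceptual obstacle, rather than a computational one, is simply to correctly invoke the smooth-outer chain rule in its partial-linearization form and to note that equality is unconditional here precisely because the nonsmoothness lives entirely in the inner Lipschitz map $F$.
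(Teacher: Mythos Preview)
Your proposal is correct and follows essentially the same approach as the paper, which simply states the fact as a direct consequence of \cite[Theorem~6.5]{mordukhovich1996nonsmooth} and \cite[Theorem~8.49]{rockafellar2009variational} without further elaboration. You have faithfully unpacked what those citations entail: writing $L=\Theta\circ F$ with $\Theta$ strictly differentiable and $F$ locally Lipschitz, invoking the smooth-outer chain rule in its partial-linearization form, and identifying the linearization with $\overline{L}$.
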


Equipped with Fact~\ref{fact:equiv}, to compute $\partial_C L$, it suffices to study the subdifferential chain rule of $\overline{L}$. Next, we give a sufficient condition for the latter based on local surjection property.

\begin{lemma}\label{lma:meta}
    Under the assumptions of Fact~\ref{fact:equiv}, given an arbitrary $(\bv^\prime,\bx^\prime)\in\R^h\times\R^d$, and let
    \begin{equation}\label{eq:function-h}
    h(\bv,\bX):=\sum_{k=1}^{h} v_{k}\cdot\sum_{i=1}^n p_{i}\cdot\sigma_{i k}(x_{i k}),\quad\text{from $\R^h\times\R^{n\times h}$ to $\R$}.
    \end{equation}
    Provided $\G$ is locally surjective at $\bx^{\prime}\in\R^d$, where $\G:\R^d\rightarrow\R^{n\times h}$ with
    \begin{equation}\label{eq:mapping-G}
        (\G(\bx))_{i k}:=g_{i k}(\bx),\quad\text{for all $(i,k)\in[n]\times[h]$},
    \end{equation}
    the subdifferential chain rule of $h\circ((\bv,\bx)\mapsto(\bv,\G(\bx)))=\overline{L}$
    holds at this point $(\bv^\prime,\bx^\prime)$.
\end{lemma}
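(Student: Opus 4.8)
The plan is to recognize the composite structure $\overline{L}=h\circ\Phi$, where $\Phi(\bv,\bx):=(\bv,\G(\bx))$ maps $\R^h\times\R^d$ into $\R^h\times\R^{n\times h}$, and then to invoke the sufficiency of the local surjection property for the Clarke subdifferential chain rule~\cite[Theorem~2.3.10]{clarke1990optimization}. To set this up, I would first record two routine facts: (i) the outer function $h$ in (\ref{eq:function-h}) is locally Lipschitz, since each $\sigma_{ik}$ is locally Lipschitz, the $p_i$ are fixed scalars, and finite sums and products of locally Lipschitz functions remain locally Lipschitz on bounded sets; and (ii) the inner map $\Phi$ is strictly differentiable at $(\bv^\prime,\bx^\prime)$, because its $\bv$-block is linear while each coordinate $g_{ik}$ of its $\G$-block is strictly differentiable by the assumptions of Fact~\ref{fact:equiv}. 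With these in hand, the only remaining ingredient is the local surjection property of $\Phi$ at $(\bv^\prime,\bx^\prime)$.

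The heart of the argument is to lift the local surjectivity of $\G$ at $\bx^\prime$ to that of $\Phi$ at $(\bv^\prime,\bx^\prime)$ via a budget-splitting argument. Given $t>0$, I would invoke the local surjectivity of $\G$ at $\bx^\prime$ with the reduced radius $t/\sqrt{2}$ to obtain $r_0>0$ satisfying $\G(\bx^\prime)+r_0\bbB\subseteq\G(\bx^\prime+(t/\sqrt{2})\bbB)$, and then set $r:=\min\{r_0,t/\sqrt{2}\}$. For any displacement $(\bd{\delta},\bd{\Delta})\in\R^h\times\R^{n\times h}$ with $\|(\bd{\delta},\bd{\Delta})\|\le r$, the easy fact (\ref{eq:easy-fact}) (applied with $k=2$ and centers at the origin) yields $\|\bd{\delta}\|,\|\bd{\Delta}\|\le r$; hence I may take $\bv:=\bv^\prime+\bd{\delta}$ so that $\|\bv-\bv^\prime\|\le t/\sqrt{2}$, and, since $\|\bd{\Delta}\|\le r_0$, select $\bx$ with $\|\bx-\bx^\prime\|\le t/\sqrt{2}$ and $\G(\bx)=\G(\bx^\prime)+\bd{\Delta}$. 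The joint displacement then obeys $\|(\bv,\bx)-(\bv^\prime,\bx^\prime)\|^2\le(t/\sqrt{2})^2+(t/\sqrt{2})^2=t^2$, so $(\bv,\bx)\in(\bv^\prime,\bx^\prime)+t\bbB$ while $\Phi(\bv,\bx)=(\bv^\prime+\bd{\delta},\G(\bx^\prime)+\bd{\Delta})$. This is precisely the inclusion $\Phi(\bv^\prime,\bx^\prime)+r\bbB\subseteq\Phi((\bv^\prime,\bx^\prime)+t\bbB)$ demanded by Definition~\ref{def:lsp}, so $\Phi$ is locally surjective at $(\bv^\prime,\bx^\prime)$.

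Combining the three ingredients --- local Lipschitzness of $h$, strict differentiability of $\Phi$, and the local surjection property of $\Phi$ just established --- the Clarke subdifferential chain rule for $\overline{L}=h\circ\Phi$ at $(\bv^\prime,\bx^\prime)$ follows immediately from~\cite[Theorem~2.3.10]{clarke1990optimization}. I expect the only point demanding any care to be the budget-splitting step, namely checking that the identity $\bv$-block and the $\G$ $\bx$-block can be realized simultaneously inside a single joint $t$-ball; this is a clean product-type manipulation entirely in the spirit of those already used in the proofs of Theorem~\ref{thm:surj-mf-anypoint} and Proposition~\ref{prop:surj-CP-origin}, so no genuine obstacle is anticipated.
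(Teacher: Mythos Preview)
Your proposal is correct and follows essentially the same approach as the paper: lift the local surjection property of $\G$ to that of $\Phi(\bv,\bx)=(\bv,\G(\bx))$ via the product-ball inclusion (\ref{eq:easy-fact}), then invoke~\cite[Theorem~2.3.10]{clarke1990optimization}. The paper's proof is terser---it simply cites (\ref{eq:easy-fact}) for the lifting and leaves the Lipschitz and strict-differentiability verifications implicit---but your explicit budget-splitting with radius $t/\sqrt{2}$ is exactly the content behind that citation.
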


\begin{proof}
    Since $\G$ is locally surjective at $\bx^{\prime}$, we know from (\ref{eq:easy-fact}) that the mapping $(\bv,\bx)\mapsto(\bv,\G(\bx))$ is locally surjective at $(\bv^\prime,\bx^{\prime})$,
    and the desired result thus follows from~\cite[Theorem~2.3.10]{clarke1990optimization}. 
\end{proof}

With the subdifferential chain rule of $\overline{L}$ established, to finish the computation of $\partial_C \overline{L}$ (and thus $\partial_C L$), it remains to evaluate $\partial_C h$.
Although this may seem hard at a first sight,
actually a very simple characterization can be found by exploiting the separable structure of $h$.
\begin{lemma}
\label{lma:pCh}
    Under the assumptions of Fact~\ref{fact:equiv}, it holds for any $(\bv^\prime,\bx^\prime)\in\R^h\times\R^d$ that
    $$
        \partial_C h(v_1,\bx_1,\ldots,v_h,\bx_h)
        =\prod_{k=1}^h\left(\left\{\sum_{i=1}^n p_{i}\cdot\sigma_{i k}(x_{i k})\right\}\times\prod_{i=1}^n(v_k p_{i}\cdot\partial_C\sigma_{i k}(x_{i k}))\right),
    $$
    where the function $h$ was defined in (\ref{eq:function-h}).
\end{lemma}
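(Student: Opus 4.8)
The plan is to exploit the separable structure of $h$ twice, with an intermediate step that factors out the linear dependence on each $v_k$. First I would regroup the variables blockwise as $(v_k,\bx_k)$, $k\in[h]$, and write $h=\sum_{k=1}^h h_k$, where $h_k(v_k,\bx_k):=v_k\cdot\sum_{i=1}^n p_i\cdot\sigma_{ik}(x_{ik})$ depends only on the block $(v_k,\bx_k)$. Since these blocks are pairwise disjoint, the separable sum rule for the Clarke subdifferential (e.g.~\cite[Proposition~2.5]{rockafellar1985extensions}) applies and gives $\partial_C h=\prod_{k=1}^h\partial_C h_k$ exactly; no regularity is needed here, because for disjoint variable blocks the Clarke subdifferential of a sum is literally the Cartesian product of the individual ones, which is just the elementary identity $\conv(A\times B)=\conv(A)\times\conv(B)$ applied to the sets of limiting gradients.

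It then remains to identify each $\partial_C h_k$ with the claimed block $\{\sum_i p_i\sigma_{ik}(x_{ik})\}\times\prod_i(v_k p_i\partial_C\sigma_{ik}(x_{ik}))$. Write $u_k(\bx_k):=\sum_{i=1}^n p_i\sigma_{ik}(x_{ik})$, so that $h_k=v_k\cdot u_k(\bx_k)$. The key observation is that $h_k$ is linear (hence smooth) in the free scalar $v_k$, with $v_k$-partial derivative $u_k(\bx_k)$ that is jointly continuous, while wherever $u_k$ is differentiable one has $\nabla h_k=(u_k(\bx_k),\,v_k\nabla u_k(\bx_k))$. Feeding this into the definition of $\partial_C h_k$ as the convex hull of limits of such gradients along sequences $(v_k^j,\bx_k^j)\to(v_k,\bx_k)$, the first coordinate converges to $u_k(\bx_k)$ by continuity and the second to $v_k\cdot g$ for $g$ a limiting gradient of $u_k$; taking convex hulls and using $\conv(\{c\}\times v_k L)=\{c\}\times v_k\conv(L)$ yields $\partial_C h_k=\{u_k(\bx_k)\}\times v_k\partial_C u_k(\bx_k)$.

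Finally I would compute $\partial_C u_k(\bx_k)$ itself. Since $u_k$ is again a sum over $i$ of the single-variable functions $x_{ik}\mapsto p_i\sigma_{ik}(x_{ik})$ on disjoint coordinates, the same separable sum rule gives $\partial_C u_k(\bx_k)=\prod_{i=1}^n\partial_C(p_i\sigma_{ik})(x_{ik})$, and the scalar-multiple rule~\cite[Proposition~2.3.1]{clarke1990optimization} turns each factor into $p_i\partial_C\sigma_{ik}(x_{ik})$ (valid for any sign of $p_i$, as the Clarke subdifferential commutes with scalar multiplication). Scaling this product by $v_k$ distributes over the Cartesian factors, so $v_k\partial_C u_k(\bx_k)=\prod_i(v_k p_i\partial_C\sigma_{ik}(x_{ik}))$; substituting back recovers exactly the claimed block for $\partial_C h_k$, and assembling over $k$ completes the proof.

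I expect the only delicate point to be the middle step, namely establishing equality (not merely the inclusion furnished by Clarke's general product rule) in $\partial_C(v_k u_k)=\{u_k\}\times v_k\partial_C u_k$. Clarke's product rule only guarantees an inclusion and would require regularity of $u_k$ — which we do not have — to yield equality; the point is that here one factor is the coordinate projection onto a variable ($v_k$) that is disjoint from the variables of the other factor, so the factorization can instead be read off directly from the limiting-gradient definition as sketched above, bypassing any regularity assumption.
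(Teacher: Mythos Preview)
Your proposal is correct and follows essentially the same route as the paper: decompose $h$ into the separable sum $\sum_k v_k\,g_k(\bx_k)$ (your $u_k$ is the paper's $g_k$), handle the inner separability of $g_k$ via \cite[Proposition~2.5]{rockafellar1985extensions} and \cite[Proposition~2.3.1]{clarke1990optimization}, and then reassemble. The one difference is the middle step $\partial_C(v_k\,g_k)=\{g_k(\bx_k)\}\times v_k\,\partial_C g_k(\bx_k)$: the paper invokes \cite[Corollary~48]{tian2023testing} for this, whereas you derive it directly from the limiting-gradient definition by exploiting that $v_k$ is a disjoint coordinate and the $v_k$-factor is linear---your argument here is sound (one may restrict to the full-measure set $\R\times D_{g_k}$ of differentiability points of $g_k$ without changing the Clarke subdifferential, by \cite[Theorem~2.5.1]{clarke1990optimization}, and then read off the product structure exactly as you describe), and it has the advantage of being self-contained.
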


\begin{proof}
    We first rewrite $h$ in a separable way
    $$
    h(\bv,\bX)=\sum_{k=1}^{h} v_{k}\cdot g_k(\bx_k),\quad\text{where $g_k(\by):=\sum_{i=1}^n p_{i}\cdot\sigma_{i k}(y_{i})$},\quad\text{from $\R^{n}$ to $\R$},
    $$
    and $g_k$ is also separable. Thus, by~\cite[Proposition~2.3.1]{clarke1990optimization} and~\cite[Proposition~2.5]{rockafellar1985extensions}, we have
    $$
    \partial_C g_k(\by)=\prod_{i=1}^n(p_{i}\cdot\partial_C\sigma_{i k}(y_i)),
    $$
    which, together with~\cite[Corollary~48]{tian2023testing} and the above references again, directly implies the claim.
\end{proof}

By combining Fact~\ref{fact:equiv}, Lemma~\ref{lma:meta}, and Lemma~\ref{lma:pCh}, we can reduce all the difficulties in computing $\partial_C L$ to
just a matter of local surjection property.

\begin{corollary}\label{cor:compute-pCL}
    Under the assumptions of Fact~\ref{fact:equiv}, provided the mapping $\G$ (\ref{eq:mapping-G}) is locally surjective at some point $\bx^{\prime}\in\R^d$, it holds for all $\bv^{\prime}\in\R^h$ that
    $$
        \partial_C L(\bv^{\prime},\bx^{\prime})=\sum_{i=1}^n p_{i}\left(\prod_{k=1}^h\{\sigma_{i k}(g_{i k}(\bx^{\prime}))\}\times\left(\sum_{k=1}^{h} v_k^{\prime}\cdot\partial_C\sigma_{i k}(g_{i k}(\bx^{\prime}))\cdot\nabla g_{i k}(\bx^{\prime})\right)\right).
    $$
\end{corollary}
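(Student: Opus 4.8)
The plan is to combine the three preceding results and then carry out a single explicit linear-image computation. First I would apply Fact~\ref{fact:equiv} to replace $\partial_C L(\bv^{\prime},\bx^{\prime})$ by $\partial_C\overline{L}(\bv^{\prime},\bx^{\prime})$, so that it suffices to compute the Clarke subdifferential of $\overline{L}$. Writing $\overline{L}=h\circ\Psi$ with the inner map $\Psi(\bv,\bx):=(\bv,\G(\bx))$ from $\R^h\times\R^d$ to $\R^h\times\R^{n\times h}$, I would then invoke Lemma~\ref{lma:meta}: since $\G$ is locally surjective at $\bx^{\prime}$ by hypothesis, the subdifferential chain rule holds for this composition at $(\bv^{\prime},\bx^{\prime})$, which together with~\cite[Theorem~2.3.10]{clarke1990optimization} yields the equality
$$
\partial_C\overline{L}(\bv^{\prime},\bx^{\prime})=\bJ^{\T}_{\Psi}(\bv^{\prime},\bx^{\prime})\,\partial_C h(\bv^{\prime},\G(\bx^{\prime})).
$$

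Next I would make the two factors on the right-hand side explicit. Because the $\bv$-block of $\Psi$ is the identity (and independent of $\bx$) while the $\G(\bx)$-block is independent of $\bv$, the Jacobian $\bJ_{\Psi}(\bv^{\prime},\bx^{\prime})$ is block-diagonal, equal to $\operatorname{diag}(\bI_h,\bJ_{\G}(\bx^{\prime}))$, where the rows of $\bJ_{\G}(\bx^{\prime})$ are the gradients $\nabla g_{ik}(\bx^{\prime})^{\T}$. Substituting the explicit description of $\partial_C h(\bv^{\prime},\G(\bx^{\prime}))$ from Lemma~\ref{lma:pCh}, evaluated at $x_{ik}=g_{ik}(\bx^{\prime})$, every element of this set has its $v_k$-coordinate equal to the fixed scalar $\sum_{i=1}^n p_i\,\sigma_{ik}(g_{ik}(\bx^{\prime}))$ and its $x_{ik}$-coordinate of the form $v_k^{\prime}\,p_i\,t_{ik}$ with $t_{ik}\in\partial_C\sigma_{ik}(g_{ik}(\bx^{\prime}))$ chosen freely and independently across all pairs $(i,k)$.

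Applying $\bJ^{\T}_{\Psi}$ then leaves the $\bv$-block untouched (through $\bI_h$) and sends the $\bX$-block through $\bJ^{\T}_{\G}$, turning the product-over-$(i,k)$ structure into the double Minkowski sum $\sum_{i=1}^n\sum_{k=1}^h v_k^{\prime}\,p_i\,t_{ik}\,\nabla g_{ik}(\bx^{\prime})$. Regrouping this sum by the index $i$, and pairing it with the fixed $\bv$-coordinates $p_i\cdot(\sigma_{i1}(g_{i1}(\bx^{\prime})),\dots,\sigma_{ih}(g_{ih}(\bx^{\prime})))$, recovers exactly the Minkowski-sum-of-products form asserted in the statement.

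I do not anticipate a genuine obstacle, as every step is either a direct citation of a prior result or a routine manipulation of Minkowski sums and Cartesian products of sets. The one point demanding care is the final bookkeeping: verifying that applying the block-transposed Jacobian to the Cartesian-product set of Lemma~\ref{lma:pCh} reproduces precisely the per-$i$ grouping in the claim, in particular that the free choices of $t_{ik}$ match on both sides and that the singleton $\bv$-block contributes exactly the first factor $\prod_{k=1}^h\{\sigma_{ik}(g_{ik}(\bx^{\prime}))\}$ of each summand. This is the step I would write out most explicitly.
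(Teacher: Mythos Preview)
Your proposal is correct and follows precisely the approach intended by the paper: the paper states the corollary as an immediate combination of Fact~\ref{fact:equiv}, Lemma~\ref{lma:meta}, and Lemma~\ref{lma:pCh}, and your plan simply spells out this combination explicitly, including the block-diagonal Jacobian computation and the Minkowski-sum regrouping that the paper leaves implicit. The only care point you already flagged---checking that the free choices of $t_{ik}$ and the singleton $\bv$-block line up with the per-$i$ summands in the asserted formula---is indeed routine once one uses that $\sum_{i}(A_i\times B_i)=(\sum_i A_i)\times(\sum_i B_i)$ for Minkowski sums of Cartesian products.
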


With Corollary~\ref{cor:compute-pCL} established, we next turn to some concrete examples.

\subsubsection{A positive result on generalized matrix factorization}
Generalized MF~\cite[Section~3.2]{he2017neural} is a vital component of the well-known neural MF method for recommender systems~\cite[Section~3.4]{he2017neural}. Specifically,
under the assumptions of Fact~\ref{fact:equiv}, 
the objective function for training a two-layer generalized MF can be formulated as\footnote{Here, we have strengthened the architecture of the generalized MF to align with other neural networks; our theory can be degraded in a straightforward manner if needed.}
$$
L_{\operatorname{GMF}}(v,\bh,\bP,\bQ;\mathbb{D}):=\sum_{(i,j)\in\mathbb{D}}\ell_{i j}(v\cdot\sigma(\langle\bh,\bp_i,\bq_j\rangle)),\quad\text{from $\R\times\R^{d}\times\R^{d\times m}\times\R^{d\times n}$ to $\R$},
$$
where $\mathbb{D}\subseteq[m]\times[n]$ for some $m,n\in\N$.
(In the context of recommender systems, $\mathbb{D}$ refers to the user-item transaction history, while $m$ and $n$ denote the number of users and items, respectively.) By specializing Corollary~\ref{cor:compute-pCL} to this setting, the mapping $\G$ (\ref{eq:mapping-G}) simply reduces to $\HI^{\dagger}_{\operatorname{CP}}$ studied in Proposition~\ref{prop:pseudo-tensors} already, and as a result, we can easily get the following corollary.

\begin{corollary}\label{cor:GMF}
    Provided (\ref{eq:surj-mf-everywhere}) holds, it holds for any $(v,\bh,\bP,\bQ)\in\R\times\R^{d}\times\R^{d\times m}\times\R^{d\times n}$ that
    $$
        \partial_C L_{\operatorname{GMF}}(v,\bh,\bP,\bQ;\mathbb{D})=\sum_{(i,j)\in\mathbb{D}} p_{i j}\cdot\left(\{\sigma(\langle\bh,\bp_i,\bq_j\rangle)\}\times\left(v\cdot\partial_C \sigma(\langle\bh,\bp_i,\bq_j\rangle)\cdot\nabla f_{i j}(\bh,\bP,\bQ)\right)\right),
    $$
    where $p_{i j}:=\ell_{i j}^{\prime}(v\cdot\sigma(\langle\bh,\bp_i,\bq_j\rangle))$ and $f_{i j}(\bh,\bP,\bQ):=\langle\bh,\bp_i,\bq_j\rangle$ for all $(i,j)\in\mathbb{D}$.
\end{corollary}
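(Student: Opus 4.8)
The plan is to recognize $L_{\operatorname{GMF}}$ as a special instance of the composite objective $L$ of Fact~\ref{fact:equiv} and then invoke Corollary~\ref{cor:compute-pCL} essentially verbatim, with the only genuine work being to verify that the inner mapping $\G$ inherits the required local surjection property from Proposition~\ref{prop:pseudo-tensors}.

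First I would set up the correspondence with Fact~\ref{fact:equiv}. Reading off the structure of $L_{\operatorname{GMF}}$, the outer summation index set is $\mathbb{D}$ (playing the role of $[n]$ in Fact~\ref{fact:equiv}); there is a single hidden unit, so the ``$h$'' of Fact~\ref{fact:equiv} equals $1$ and its output weight vector ``$\bv$'' collapses to the scalar $v$; the activations $\sigma_{ik}$ all reduce to the single $\sigma$; and the smooth inner maps are $g_{ik}=f_{ij}$ with $f_{ij}(\bh,\bP,\bQ)=\langle\bh,\bp_i,\bq_j\rangle$. The ``$\bx$''-variable of Fact~\ref{fact:equiv} is thus the concatenated parameter block $(\bh,\bP,\bQ)$. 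I would flag explicitly the (purely notational) clash between the letter $h$ used for the hidden width in Fact~\ref{fact:equiv} and the parameter vector $\bh$ in the GMF model, so that setting the former to $1$ is not confused with the latter. Each $f_{ij}$ is trilinear, hence strictly differentiable, so the hypotheses of Fact~\ref{fact:equiv} are met once the $\ell_{ij}$ are strictly differentiable and $\sigma$ is locally Lipschitz, which is exactly the standing assumption inherited from the surrounding discussion.

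Second, I would identify the mapping $\G$ of (\ref{eq:mapping-G}) arising in this specialization. Here $(\G(\bh,\bP,\bQ))_{(i,j)}=\langle\bh,\bp_i,\bq_j\rangle$, and using the elementary identity $\langle\bh,\bp_i,\bq_j\rangle=(\bP^{\T}\operatorname{diag}(\bh)\bQ)_{ij}$ this is precisely the entry-$(i,j)$ restriction (to indices in $\mathbb{D}$) of the map $\HI^{\dagger}_{\operatorname{CP}}$ from Proposition~\ref{prop:pseudo-tensors}. By that proposition together with the submapping observation of Remark~\ref{rmk:special-case} (which the excerpt notes applies to every prototypical example), the condition $d\ge m+n+\min\{m,n\}$---i.e.\ exactly (\ref{eq:surj-mf-everywhere}) with $(n_2,n_3)=(m,n)$---guarantees that $\G$ is locally surjective at every $(\bh,\bP,\bQ)$.

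Finally, with local surjectivity of $\G$ in hand, I would apply Corollary~\ref{cor:compute-pCL} and let the index bookkeeping collapse: since $h=1$, the products $\prod_{k=1}^h$ and inner sums $\sum_{k=1}^h$ each reduce to a single term, $v_1^{\prime}=v$, and $\nabla g_{ik}=\nabla f_{ij}$; the resulting expression is verbatim the claimed formula, with $p_{ij}=\ell_{ij}^{\prime}(v\cdot\sigma(\langle\bh,\bp_i,\bq_j\rangle))$. I expect no serious obstacle here: the one step requiring care is the second, namely checking that the local surjection property proved for the full map $\HI^{\dagger}_{\operatorname{CP}}$ indeed descends to its coordinate restriction onto $\mathbb{D}$, so that the hypothesis of Corollary~\ref{cor:compute-pCL} is verified for the actual $\G$ rather than for a larger map---which is exactly what Remark~\ref{rmk:special-case} supplies. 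Everything else is direct substitution.
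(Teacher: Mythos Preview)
Your proposal is correct and follows essentially the same route as the paper: the text preceding the corollary says exactly that specializing Corollary~\ref{cor:compute-pCL} to this setting reduces $\G$ to $\HI^{\dagger}_{\operatorname{CP}}$ from Proposition~\ref{prop:pseudo-tensors}, whence the result follows. Your explicit invocation of Remark~\ref{rmk:special-case} to pass from the full map to its $\mathbb{D}$-restriction is a detail the paper leaves implicit, but otherwise the argument is identical.
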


\subsubsection{A negative result on neural matrix factorization}
As a natural advance, one may expect to generalize the above result to the overall neural MF, whose training objective function under the assumptions of Fact~\ref{fact:equiv} can be expressed as
$$
    L_{\operatorname{NeuMF}}(\bh,\bP,\bQ,\bv,\bW,\bX,\bS,\bY,\bb;\mathbb{D}):=
    \sum_{(i,j)\in\mathbb{D}}\ell_{i j}\left(\langle\bh,\bp_i,\bq_j\rangle + \sum_{k=1}^h v_{k}\cdot\sigma(\bw_k^{\T}\bx_i+\bs_k^{\T}\by_j+b_k)\right).
$$
Unfortunately, we will show that the generalization is in general not possible, which shows the necessity to develop new techniques to compute the subdifferentials of this type of problems. 

To begin with, we use Fact~\ref{fact:equiv} to linearize $L_{\operatorname{NeuMF}}$, and then~\cite[Corollary~2.3.1]{clarke1990optimization} to get rid of the smooth part from our considerations. This reduces the computation to the establishment of the local surjection property of the map $\HI_{\operatorname{NeuMF}}:\R^{d\times h}\times\R^{d\times m}\times\R^{d\times h}\times\R^{d\times n}\times\R^{h}\rightarrow\R^{h m n}$ with
\begin{equation}\label{eq:mapping-neuralMF}
(\HI_{\operatorname{NeuMF}}(\bW,\bX,\bS,\bY,\bb))_{i j k}:=\bw_k^{\T}\bx_i+\bs_k^{\T}\by_j+b_k,\quad\text{for all $(i,j,k)\in[m]\times[n]\times[h]$}.
\end{equation}
However, as the following lemma shows, $\HI_{\operatorname{NeuMF}}$ can almost never possess the desired property.

\begin{lemma}
    If $\min\{m,n\}\ge 2$, then the map $\HI_{\operatorname{NeuMF}}$
    (\ref{eq:mapping-neuralMF})
    is not locally surjective anywhere.
\end{lemma}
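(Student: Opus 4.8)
The plan is to show that the range of $\HI_{\operatorname{NeuMF}}$ (\ref{eq:mapping-neuralMF}) is trapped inside a proper linear subspace of $\R^{hmn}$, from which the failure of the local surjection property at every point follows at once. The key structural observation is that each entry $(\HI_{\operatorname{NeuMF}}(\bW,\bX,\bS,\bY,\bb))_{ijk}=\bw_k^{\T}\bx_i+\bs_k^{\T}\by_j+b_k$ is \emph{additively} separable into a summand depending only on $(i,k)$, a summand depending only on $(j,k)$, and a summand depending only on $k$. This is in sharp contrast with the matrix and FM cases, where the bilinear coupling $\bx_i^{\T}\by_j$ admits no such decomposition, and it is precisely this additivity that obstructs local surjectivity.

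First I would use the hypothesis $\min\{m,n\}\ge 2$ to fix indices $i\neq i'$ in $[m]$, $j\neq j'$ in $[n]$, and an arbitrary $k\in[h]$, and then introduce the ``second-difference'' linear functional $\phi$ on $\R^{hmn}$ given by
$$
\phi(\mathbfcal{T}):=\mathbfcal{T}_{ijk}-\mathbfcal{T}_{i'jk}-\mathbfcal{T}_{ij'k}+\mathbfcal{T}_{i'j'k}.
$$
This $\phi$ is manifestly nonzero. A direct substitution then shows that $\phi$ annihilates the entire range of $\HI_{\operatorname{NeuMF}}$: in the alternating sum the $\bw_k$-contribution, the $\bs_k$-contribution, and the $b_k$-contribution each cancel separately, so that $\phi(\HI_{\operatorname{NeuMF}}(\bW,\bX,\bS,\bY,\bb))=0$ for every argument. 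Consequently the range of $\HI_{\operatorname{NeuMF}}$ is contained in the hyperplane $\ker\phi$, a proper subspace of $\R^{hmn}$.

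Finally I would appeal to Definition~\ref{def:lsp}. Writing $\bd{\Theta}$ for an arbitrary point of the domain and fixing any $t>0$, the local surjection property would demand some $r>0$ with $\HI_{\operatorname{NeuMF}}(\bd{\Theta})+r\bbB\subseteq\HI_{\operatorname{NeuMF}}(\bd{\Theta}+t\bbB)$. But the right-hand side lies in the range of $\HI_{\operatorname{NeuMF}}$, hence in $\ker\phi$, whereas for every $r>0$ the left-hand side is a full-dimensional ball containing points on which $\phi$ does not vanish; no proper subspace can contain such a ball. Thus no admissible $r$ exists, and since $\bd{\Theta}$ and $t$ were arbitrary, $\HI_{\operatorname{NeuMF}}$ is nowhere locally surjective. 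There is essentially no genuine obstacle in this argument; the only point demanding care is the verification that the alternating-sum functional kills all three summands simultaneously, which is exactly where the additive-separability of (\ref{eq:mapping-neuralMF}) is used.
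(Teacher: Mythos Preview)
Your proposal is correct and follows essentially the same strategy as the paper: both exhibit a nontrivial linear relation satisfied identically by the entries of $\HI_{\operatorname{NeuMF}}$ (the paper writes $T_{iik}+T_{jjk}=T_{ijk}+T_{jik}$ for $i,j\le\min\{m,n\}$, while you use the second difference $T_{ijk}-T_{i'jk}-T_{ij'k}+T_{i'j'k}=0$), thereby confining the range to a proper subspace. Your concluding step---that a full-dimensional ball cannot lie in a hyperplane---is slightly more elementary than the paper's, which appeals to the fact that a proper subspace has Lebesgue measure zero.
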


\begin{proof}
    It is elementary to observe that for any $k\in[h]$ and $i,j\le\min\{m,n\}$ with $i\neq j$ (which do exist due to the assumption $\min\{m,n\}\ge 2$), we have
    $$
    (\bw_k^{\T}\bx_i+\bs_k^{\T}\by_i+b_k)+(\bw_k^{\T}\bx_j+\bs_k^{\T}\by_j+b_k)=(\bw_k^{\T}\bx_i+\bs_k^{\T}\by_j+b_k)+(\bw_k^{\T}\bx_j+\bs_k^{\T}\by_i+b_k).
    $$
    Hence, it follows that the range of $\HI_{\operatorname{NeuMF}}$ is contained within some proper subspace of $\R^{h m n}$.
    By a classical result in real analysis~\cite[Theorem~2.20(e)]{rudin1987rca}, we know the Lebesgue measure of any proper subspace must be zero.
    Thus, $\HI_{\operatorname{NeuMF}}$ can never be locally surjective anywhere.
\end{proof}


Nevertheless, we think the most fundamental reason for the invalidation of the local surjection property of $\HI_{\operatorname{NeuMF}}$
is because it is not an MF in any conventional sense: The mapping $\HI_{\operatorname{NeuMF}}$ uses $\bw_k^{\T}\bx_i+\bs_k^{\T}\by_j+b_k$ to fit the $(i,j)$-th data, where $\bx_i$ and $\by_j$ are separated in two different terms, instead of something like $\bx_i^{\T}\by_j$ used in the vanilla MF.



\subsubsection{Prospects}\label{sec:equiv-tensor}
Admittedly, the applications of the developed theories stop at the generalized MF. This is because for other related models, their studies more or less rely on the the local surjection property of $\HI_{\operatorname{CP}}$ (\ref{eq:mapping-CP})
or its close relatives, which have not been fully understood. For such examples, the readers may refer to~\cite[Equation~1]{socher2013reasoning},~\cite[Equation~3]{blondel2016higher},~\cite[Equations~3-6]{he2017neuralfm},~\cite[Equation~5]{xiao2017attentional},~\cite[Equation~7]{he2018nais},~\cite[Equation~3]{xin2019relational},~\cite[Equation~5]{fan2019graph}, and~\cite[Section~4.1]{chen2021neural}. 
To be more specific, let us take the two-layer homogeneous neural FM~\cite{he2017neuralfm} as a concrete example, whose training objective function under the assumptions of Fact~\ref{fact:equiv} can be formulated as
$$
L_{\operatorname{NeuFM}}(\bv,\bP,\bH;\mathbb{T}):=\sum_{k=1}^n\ell_{k}\left(\sum_{s=1}^h v_{s}\cdot\sigma\Bigg(\sum_{j>i} x_{i k} x_{j k}\cdot \langle\bh_s,\bp_i,\bp_j\rangle\Bigg)\right),~\text{from $\R^{h}\times\R^{d\times d_0}\times \R^{d\times h}$ to $\R$}.
$$
Under the data qualification in (\ref{eq:dataset-qualification}), it can be easily shown that as long as $\HI_{\operatorname{NeuFM}}(\bullet,\bullet;\ba)$ (\ref{eq:neuralFM})
possesses the local surjection property, we can easily compute $\partial_C L_{\operatorname{NeuFM}}(\bullet,\bullet,\bullet;\mathbb{T})$.
In view of the above discussions, 
it should be meaningful to further understand the mappings in Section~\ref{sec:lsp-cp}. 

\section{Concluding remarks}\label{sec:conclusion}
In this paper, we have studied the subdifferential chain rules of matrix factorization and factorization machine through the lens of local surjection property. 
Specifically, it has been shown for these problems that provided the latent dimension is larger than some multiple of the problem size (i.e., slightly overparameterized) and the loss function is locally Lipschitz, they are locally surjective (and therefore their subdifferential chain rules hold) everywhere.
In addition, we have examined the tightness of our conditions for the local surjection property through some interesting constructions and made some important observations from the perspective of optimization. Some tensor generalizations and neural extensions have also been discussed.

This work also opens up several interesting (but can be rather challenging) future directions. Firstly, recall that in this paper we have considered a very general setting where the loss and activation functions are only required to be locally Lipschitz, which can be quite arbitrary. However, real-world learning models
are usually composed of well-structured functions (e.g., the ReLU, which is piecewise-linear and convex).
Therefore, studying how to exploit such structures to further improve the conditions for subdifferential chain rules
should be interesting and meaningful. The second one stems from the following fact: Actually, 
even if we replace the ball $\F(\bx)+r\bbB$ in Definition~\ref{def:lsp} by a dense subset of it, the resulting property would still suffice to guarantee the validity of the subdifferential chain rule~\cite[Theorem~2.3.10]{clarke1990optimization}.
As a result, some further improvement may be possible along this direction. (We remark that our negative constructions (cf.~Examples~\ref{ex:mf-general} and~\ref{ex:fm-general}) are not strong enough to refute such a relaxed property.) In addition, continuing our study in Section~\ref{sec:lsp-cp} should be promising and important as well, as has already been explained in Section~\ref{sec:equiv-tensor}. 
Apart from the above, studying the subdifferential chain rule of symmetric matrix factorization, on which many interesting problems are based (see, e.g.,~\cite{li2020nonconvex,liu2024symmetric}), is also meaningful.
However, it is impossible to carry over our theories to this setting since even in the one-dimensional case the function $x\mapsto x^2$ can never be locally surjective at the origin due to its nonnegativity, and therefore new techniques are necessary. 
By the way, studying the local surjection property of other types of factorizations and products, such as the matrix tri-factorization~\cite{ding2005equivalence,ding2006orthogonal}, tensor-matrix mode-$n$ product~\cite[Section~2.5]{kolda2009tensor}, and tensor-tensor contracted product~\cite[Section~3.3]{bader2006algorithm}, should also be interesting. 
Finally, recall that in this paper we have only considered full factorizations
and directly reused their conditions to subsume all sparse cases (cf.~Remark~\ref{rmk:special-case}).
However, the sparsity
might be particularly beneficial for our purposes as it relaxes the requirements for local surjection property. Besides, many datasets in real-world applications can indeed be extremely sparse; see, e.g.,~\cite[Table~4]{guan2024hybrid}. Therefore, studying if we can exploit the data sparsity to further improve our conditions is also promising and important.

\section*{Acknowledgments} 
The authors would like express their sincere gratitude to \href{https://www1.se.cuhk.edu.hk/~tianlai/}{Lai Tian} at the Chinese University of Hong Kong for very helpful discussions at various stages of the project, as well as \href{https://wangjinxin-terry.github.io/}{Jinxin Wang} for bringing the reference~\cite{levin2024effect} to their attention.

\bibliographystyle{ieeetr}
\bibliography{references.bib}{}

\end{document}